\numberwithin{equation}{section}       
\theoremstyle{plain}
\newtheorem{theorem}{Theorem}[section]
\newtheorem{prop}{Proposition}[section]
\newtheorem{coro}[prop]{Corollary}
\newtheorem{lemma}[prop]{Lemma}
\theoremstyle{definition}
\theoremstyle{remark}
\newtheorem{remark}[prop]{Remark}
\newtheoremstyle{citing}
  {3pt}
  {3pt}
  {\itshape}
  {}
  {\bfseries}
  {.}
  {.5em}
  {\thmnote{#3}}
\theoremstyle{citing}
\DeclareMathAlphabet{\mathpzc}{OT1}{pzc}{m}{it} 
\newcommand{\C}{\mathbb{C}}
\newcommand{\N}{\mathbb{N}}
\newcommand{\Z}{\mathbb{Z}}
\newcommand{\teta}{\widetilde{\teta}}
\newcommand{\dist}{d}
\DeclareMathOperator{\id}{Id}
\begin{document}

\title[]{No hyperbolic sets in $J_\infty$ for infinitely renormalizable quadratic polynomials}

\author{Genadi Levin}

\address{Institute of Mathematics, The Hebrew University of Jerusalem, Givat Ram,
Jerusalem, 91904, Israel}

\email{levin@math.huji.ac.il}

\author{Feliks Przytycki}

\address{Institute of Mathematics, Polish Academy of Sciences, \'Sniadeckich St., 8, 00-956 Warsaw,
Poland}

\email{feliksp@impan.pl}

\date{\today}

\maketitle

\begin{abstract}
Let $f$ be an infinitely renormalizable quadratic polynomial and $J_\infty$ be the intersection of forward orbits of "small" Julia sets
of its simple renormalizations. We prove that $J_\infty$ contains no hyperbolic sets.
\end{abstract}

\section{Introduction}
Let $f$ be a rational function of degree at least $2$ considered as a dynamical systems $f:\hat\C\to\hat\C$ on the Riemann sphere $\hat\C$. An $f$-invariant compact set $X\subset \hat\C$ is said to be {\it hyperbolic}
if $f: X\to X$ is uniformly expanding
, i.e., for some $C>0$ and $\lambda>1$, $|D(f^m)(x)|\ge C\lambda^m$
for all $x\in X$ and all $m\ge 0$
(here $D$ stands for the spherical derivative and $f^m$ is $m$-iterate of $f$). In particular, any repelling periodic orbit of $f$ is a hyperbolic set.
The closure of all repelling periodic orbits of $f$ is the Julia set $J(f)$ of $f$.
Hyperbolic sets of $f$ are contained in $J(f)$. Apart of repelling periodic orbits, $f$ admits plenty of infinite (Cantor) hyperbolic sets \cite{PU}.
Attracting periodic orbits (if any) along with their basins are contained in the complement $\hat\C\setminus J(f)$ (which is called the Fatou set of $f$). See e.g. \cite{CG} for an introduction to complex dynamics and \cite{Mary} for a recent survey.

If $J(f)$ is a hyperbolic set by itself, i.e., $f:J(f)\to J(f)$ is uniformly expanding, then $f$ is called a hyperbolic rational map.
Equivalently, all critical points of $f$ are in basins of attracting cycles.
Hyperbolic rational maps are analogous to Axiom A diffeomorphisms and their dynamics has been intensively studied and very well understood.
The famous 'Density of Hyperbolicity Conjecture (DHC)' in holomorphic dynamics - sometimes also called the Fatou conjecture - asserts that any rational map (polynomial) can be approximated by hyperbolic rational maps
(polynomials) of the same degree.

In what follows $f$ (unless mentioned explicitly) is a quadratic polynomial $f_c(z)=z^2+c$. The DHC (as well as a more general MLC: Mandelbrot set Locally Connected) is widely open for the
quadratic family $f_c$, too (DHC for $f_c$ as strongly believed accumulates in itself the essence of the general DHC). After a breakthrough work of Yoccoz \cite{H} on the MLC, the only obstacle for proving DHC for quadratic polynomials are so-called infinitely renormalizable ones, see \cite{mcm}.

Somewhat informally, a quadratic polynomial $f_c$ with connected Julia set is called renormalizable if, for some topological disks $U,V$  around the critical point $0$ of $f_c$
and for some $p\ge 2$ (called period of the renormalization), the restriction $f_c^p:U\to V$
is conjugate to another quadratic polynomial $f_{c'}$ with connected Julia set (see \cite{DH} for exact definitions and the theory of polynomial-like mappings).
The map $F:=f_c^p:U\to V$ is then a {\it renormalization} of $f_c$
and the set $K(F)=\{z\in U: F^{n}(z)\in U \mbox{ for all } n\ge 1\}$ is a {\it "small" (filled in ) Julia set} of $f_c$.
If $f_{c'}$ is renormalizable by itself, then $f_c$ is called twice renormalizable, etc.
If $f_c$ admits infinitely many renormalizations, it is called {\it infinitely renormalizable}.
Recall that the renormalization $F$ is {\it simple} if any two sets $f^i(K(f))$, $f^j(K(F))$, $0\le i<j\le p-1$,
are either disjoint or intersect each other at a unique point which does not separate either of them.

To state our main result - which is Theorems \ref{thm:exp} - let $f(z)=z^2+c$ be infinitely renormalizable.
Let $1=p_0<p_1<...<p_n<...$ be the sequence
of consecutive periods of simple renormalizations of $f$ and $J_n$ denotes the "small" Julia set of the
$n$-renormalization (where $J_0=J(f)$). Then $p_{n+1}/p_n$ is an integer, $f^{p_n}(J_n)=J_n$, for any $n$, and $\{J_n\}_{n=1}^\infty$ is a strictly decreasing sequence of continua without interior, all containing $0$.
Let
$$J_\infty=\cap_{n\ge 0}\cup_{j=0}^{p_n-1}f^j(J_n)$$
be the intersection of orbits of the "small" Julia sets.
$J_\infty$ is a compact $f$-invariant set which contains the omega-limit set $\omega(0)$ of $0$.
Each component of $J_\infty$ is wandering, in particular, $J_\infty$ contains no periodic orbits of $f$.
Note that a hyperbolic set in $J_\infty$ (if existed) could not be repelling,
that is any forward orbit of a point sufficiently close to this set must be in the set itself, since otherwise shadowing periodic orbits must be
in $J_\infty$.

It is shown in \cite{LPS} that the low Lyapunov exponent of the critical value $c\in J(f_c)$ is always non-negative.
In the considered case, $c\in J_\infty$. We prove:
\begin{theorem}\label{thm:exp}
$J_\infty$ contains no hyperbolic sets.
\end{theorem}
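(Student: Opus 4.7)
We argue by contradiction. Assume $X\subset J_\infty$ is a non-empty hyperbolic set with expansion constant $\lambda>1$. Since $|Df(0)|=0$, the critical point $0\notin X$, and we may fix a neighborhood $U$ of $X$ with $\overline U\cap\{0\}=\emptyset$ on which the inverse branches of $f^n$ along backward $X$-orbits are well defined with bounded distortion and contract by $\lambda^{-n}$.

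The first step, and the main technical obstacle, is to establish the \emph{non-repelling property} asserted in the introduction: there exists $\varepsilon_0>0$ such that every $y\in J_\infty$ with $\dist(y,X)<\varepsilon_0$ in fact belongs to $X$. The plan is to follow the sketch indicated in the introduction. Assuming instead that $y\in J_\infty\setminus X$ lies close to $X$, one forms $f$-pseudo-orbits by concatenating the forward $J_\infty$-orbit of $y$ with suitable pieces of orbits in $X$, and applies the shadowing lemma for the hyperbolic set $X$ to obtain genuine periodic orbits of $f$ close to $X$. By closedness and forward-invariance of $J_\infty$, together with a limiting argument that forces the shadowing periodic points to lie inside $J_\infty$, one produces a periodic $f$-orbit in $J_\infty$, contradicting the wandering-component property.

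Once Step 1 is available, the remainder is a short topological argument. Fix $x\in X$ and let $C$ be the component of $J_\infty$ containing $x$. Because the renormalizations are simple, the sets $f^j(J_n)$ are pairwise disjoint or meet only at non-separating points, so $C=\bigcap_n f^{j_n(x)}(J_n)$ for a unique sequence of indices $j_n(x)$. Now $C\cap X$ is closed in $C$ because $X$ is closed, and it is open in $C$ by Step 1: for any $z\in C\cap X$, a neighborhood of $z$ inside $C\subset J_\infty$ consists of points close to $X$, hence lying in $X$. Connectedness of $C$ together with $x\in C\cap X$ yields $C\subset X$.

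Each continuum $f^{j_n(x)}(J_n)$ also contains the critical-orbit point $f^{j_n(x)}(0)$, and by the nesting $f^{j_{n+1}(x)}(J_{n+1})\subset f^{j_n(x)}(J_n)$ any accumulation point of the sequence $\{f^{j_n(x)}(0)\}_n$ lies in $C$, hence in $X$. Choose such an accumulation point $y$; then $y\in\omega(0)$. For an infinitely renormalizable quadratic polynomial, $\omega(0)$ is a minimal $f$-invariant Cantor set (the solenoidal post-critical dynamics), so the forward orbit of $y$ is dense in $\omega(0)$. Since $X$ is closed and forward-invariant and contains this forward orbit, $\omega(0)\subset X$; as $0$ is recurrent, $0\in\omega(0)\subset X$, contradicting $0\notin X$. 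The essential obstacle of the whole argument is Step 1: delicately exploiting the hyperbolic shadowing structure of $f$ near $X$ together with the topology and no-periodic-orbit property of $J_\infty$ to trap the shadowing periodic orbits inside $J_\infty$.
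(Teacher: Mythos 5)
Your approach is genuinely different from the paper's. The paper proves that every point of $X$ is accessible by external rays (Corollary~\ref{accesscoro}, built on the telescope construction from~\cite{Pr}), establishes a combinatorial fact about the external arguments landing on the critical component $K_c$ (Lemma~\ref{2p}), reduces to a minimal hyperbolic set, and derives a contradiction with the connectivity of the small Julia sets $J_{c,n}$ by trapping exponentially many preimages of $X\cap K_c$ in two disjoint sectors. Your proposal instead tries a shadowing argument followed by a purely topological argument involving components of $J_\infty$ and $\omega(0)$.

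The decisive gap is your Step~1, which you yourself flag as the ``essential obstacle'' without resolving it. The claim that $X$ contains an entire $J_\infty$-relative neighborhood of itself is very strong, and the sketched shadowing argument does not establish it. First, the periodic orbits produced by the shadowing lemma lie merely near $X$; $J_\infty$ is a nowhere-dense, non-isolated set, and there is no mechanism proposed that forces a point near $J_\infty$ to lie in $J_\infty$. The phrase ``a limiting argument that forces the shadowing periodic points to lie inside $J_\infty$'' is exactly the missing content, and it is not clear how any such limiting argument could work, since $J_\infty$ is not stable under small perturbations of orbits. Second, the pseudo-orbit construction presupposes that the forward $J_\infty$-orbit of $y$ remains in a small neighborhood of $X$ for a controlled time, which is not justified and in general fails. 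In fact the remark in the paper's introduction is making the \emph{opposite} point: a hyperbolic $X\subset J_\infty$ cannot be locally maximal, because if it were, shadowing would place periodic orbits inside $X\subset J_\infty$; so precisely the isolation property one would want to exploit must fail, and there is no reason for the dichotomy ``either $y\in X$ or a periodic orbit appears in $J_\infty$.''

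Even granting Step~1, Step~3 uses two further unproven facts: minimality of $f|_{\omega(0)}$ and recurrence $0\in\omega(0)$. Neither is stated in the paper and neither is routine in the absence of complex bounds, i.e. precisely in the regime where $J_\infty$ may fail to be totally disconnected and $\omega(0)$ need not be a Cantor set. One also needs to be careful that the accumulation point $y$ of $\{f^{j_n(x)}(0)\}$ actually lies in $\omega(0)$ when $j_n(x)$ stays bounded (though in that case the component of $J_\infty$ through $x$ is an image of $K_0$, which you can use directly). In short, the overall topological skeleton (Steps~2--3) is a plausible alternative route, but the proof is not there: the key non-isolation claim of Step~1 is unsupported and likely cannot be obtained by shadowing, and the postcritical facts invoked in Step~3 are nontrivial and left unproved.
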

Combined with the Fatou-Mane theorem \cite{mane} Theorem \ref{thm:exp} immediately implies
\begin{coro}
$\omega(x)\cap \omega(0)\neq\emptyset$, for the omega-limit set $\omega(x)$ of every $x\in J_\infty$.
\end{coro}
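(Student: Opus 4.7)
The plan is to argue by contradiction and reduce the corollary to Theorem \ref{thm:exp} by way of Mañé's theorem \cite{mane}. Fix $x\in J_\infty$ and suppose $\omega(x)\cap\omega(0)=\emptyset$; I will show that $\omega(x)$ must then be a nonempty hyperbolic subset of $J_\infty$, directly contradicting Theorem \ref{thm:exp}.

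First I would record the trivial inclusions: $\omega(x)$ is a nonempty compact $f$-invariant subset of $J_\infty\subset J(f)$, since $J_\infty$ is closed and forward-invariant. Next I would verify the two exclusions needed to invoke Mañé's theorem on the compact invariant set $\omega(x)$. For the critical point, if $0\in\omega(x)$ then by forward invariance and closedness of $\omega(x)$ we would have $\omega(0)\subset\omega(x)$, contradicting $\omega(x)\cap\omega(0)=\emptyset$ since $\omega(0)\neq\emptyset$. For parabolic periodic points, I use that an infinitely renormalizable quadratic polynomial has no non-repelling cycle at all: a quadratic has at most one non-repelling cycle, and a parabolic or attracting one would trap the critical orbit in its basin, which is incompatible with $0$ lying in every small Julia set $J_n$, whose diameters tend to zero.

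With these two points in hand, Mañé's theorem yields that $\omega(x)$ is a hyperbolic set for $f$. Since $\omega(x)\subset J_\infty$ and is nonempty, this contradicts Theorem \ref{thm:exp}, completing the proof. There is no genuine obstacle in this reduction: all the substantive work is concentrated in Theorem \ref{thm:exp}, while the auxiliary inputs -- forward-invariance of $\omega$-limit sets and the absence of non-repelling cycles for infinitely renormalizable quadratics -- are standard.
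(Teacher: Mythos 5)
Your reduction is exactly the one the paper intends: the Corollary is stated as an immediate consequence of Theorem \ref{thm:exp} combined with the Fatou--Ma\~n\'e theorem, and your argument (assume $\omega(x)\cap\omega(0)=\emptyset$, check that $\omega(x)$ is a compact invariant subset of $J_\infty$ free of critical and parabolic points, apply Ma\~n\'e to get a hyperbolic set inside $J_\infty$, contradict Theorem \ref{thm:exp}) is precisely that reduction, correctly carried out in its main lines.

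Two points deserve correction or sharpening. First, your justification for the absence of attracting/parabolic cycles -- that the diameters of the small Julia sets $J_n$ tend to zero -- is false in general: the whole point of the paper is that $J_\infty$ need not be totally disconnected, i.e.\ $K_0=\bigcap_n J_n$ can be a nondegenerate continuum, so $\diam J_n$ need not shrink. The fact you want is nonetheless standard (an infinitely renormalizable quadratic has all cycles repelling, cf.\ \cite{mcm}), and for the application you need even less: Ma\~n\'e's theorem only requires that $\omega(x)$ itself contain no parabolic periodic point, and the paper already records in the Introduction and in (B) of Section \ref{prel} that $J_\infty$ contains no periodic orbits at all, so $\omega(x)\subset J_\infty$ is automatically free of them. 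Second, the hypotheses of Ma\~n\'e's theorem are not just ``no critical point and no parabolic point in the set'': one also needs the set to be disjoint from the $\omega$-limit set of every recurrent critical point (otherwise, e.g., the boundary of a Siegel disk would be declared hyperbolic). Here this extra hypothesis is exactly your standing assumption $\omega(x)\cap\omega(0)=\emptyset$ (vacuous if $0$ is not recurrent), so the application is legitimate, but you should invoke it explicitly rather than only using the assumption to exclude $0$ from $\omega(x)$.
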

The conclusion of Theorem \ref{thm:exp} would obviously hold provided
\begin{equation}\label{point}
J_\infty \mbox{ is totally disconnected. }
\end{equation}
(\ref{point}) is true indeed for many classes of maps (including real ones) where it follows from 'complex bounds' \cite{Su} (meaning roughly that the sequence of renormalizations is compact)
\cite{ls}, \cite{grsw}, \cite{lyya}, \cite{kahn}, \cite{kahnl1}, \cite{kahnl2}.
See also \cite{kss1}, \cite{kss2}.
However, (\ref{point}) breaks down in general:
see \cite{Mi0}, \cite{So} for the existence of such maps
and \cite{L}, \cite{L1},  \cite{L1add} (see also \cite{CS}) for explicit combinatorial conditions on $f_c$ for (\ref{point}) to fail.
Yoccoz \cite{Yfields} posed a problem to find a necessary and sufficient condition on the combinatorics of $f_c$ for
(\ref{point}) to hold. At present, the gap between known sufficient and necessary conditions is still very big.

Another well-known open problem is to give necessary and sufficient conditions so that the Julia set $J(f)$ is locally-connected. For example, if (\ref{point}) does not hold then $J(f)$ is not locally-connected.
Theorem\ref{thm:exp} implies
\begin{theorem}\label{thm:explc}
Let $f(z)=z^2+c$ and $f$ has no irrational indifferent periodic orbits.
Then $J(f)$ is locally-connected at every point of any hyperbolic set $X$ of $f$.
In particular, there are at least one and at most finitely many external rays landing at each $x\in X$.
\end{theorem}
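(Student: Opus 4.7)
The plan is to combine Theorem~\ref{thm:exp} with Yoccoz's puzzle theory and Carath\'eodory's theorem. I would first reduce to the infinitely renormalizable case: if $f$ is hyperbolic or has a parabolic cycle, then $J(f)$ is globally locally connected by classical results; and if $f$ has only repelling cycles but is at most finitely renormalizable, Yoccoz's theorem (using the no-irrational-indifferent hypothesis) yields the same conclusion. In each of these subcases, both statements of the theorem are immediate from Carath\'eodory's extension theorem, so assume henceforth that $f$ is infinitely renormalizable and let $X$ be a hyperbolic set of $f$.

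The key step is then the identity $X\cap J_\infty=\emptyset$. Indeed, each $\bigcup_{j=0}^{p_n-1}f^j(J_n)$ is forward $f$-invariant, hence so is $J_\infty$; therefore $X\cap J_\infty$ is a forward-invariant closed subset of the hyperbolic set $X$, and if it were nonempty its $\omega$-limit set would be a nonempty compact $f$-invariant subset of $J_\infty$ on which $f$ is still uniformly expanding, i.e., a hyperbolic set contained in $J_\infty$ --- contradicting Theorem~\ref{thm:exp}. Consequently, for every $x\in X$ there exists $N=N(x)$ with $x\notin\bigcup_{j=0}^{p_N-1}f^j(J_N)$. A Yoccoz puzzle/tableau analysis at renormalization depth $N$ --- whose only tableau obstruction to shrinking, namely the persistently renormalizable critical tableau, is confined to the orbit of $J_N$ --- then forces the puzzle pieces containing $x$ to shrink to $\{x\}$, yielding local connectivity of $J(f)$ at $x$.

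Finally, local connectivity of $J(f)$ at $x$ together with its connectedness gives, via Carath\'eodory, at least one external ray landing at $x$. For the finiteness of the number of such rays, I would invoke the standard fact that a point of $J(f)$ receiving infinitely many external rays must lie in the closure of the forward critical orbit $\overline{\{f^n(0):n\ge 0\}}$; this closure is contained in $J_\infty$ because $0\in J_n$ for every $n$ and $J_\infty$ is forward $f$-invariant, and it is therefore disjoint from $X$. The principal obstacle in this plan is the Yoccoz puzzle shrinking at depth $N$: although morally a ``relative Yoccoz theorem'' at the $N$-th renormalization level, it requires a careful tableau argument to confine the obstruction to the orbit of $J_N$; once that is in hand, the remaining pieces reduce to classical material.
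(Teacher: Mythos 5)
Your proposal follows the same route as the paper's proof: use classical results (hyperbolic/parabolic, Yoccoz) to dispose of the non-infinitely-renormalizable cases, use Theorem~\ref{thm:exp} to force $X\cap J_\infty=\emptyset$, and then invoke shrinking of Yoccoz puzzle pieces at points off $\bigcup_{j<p_N}f^j(J_N)$ for local connectivity; the paper simply cites \cite{Mi0}, \cite{lfund} for that last puzzle step rather than re-deriving it, so the obstacle you flag there is a known result, not a gap. Two smaller remarks. First, your $\omega$-limit detour is unnecessary: since $J_\infty$ is forward invariant and the paper's hyperbolic sets only need $f(X)\subset X$, the set $X\cap J_\infty$ itself, if nonempty, is already a compact forward-invariant uniformly expanding set inside $J_\infty$, contradicting Theorem~\ref{thm:exp} directly. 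Second, the ``standard fact'' you invoke for ray-finiteness --- that a point receiving infinitely many rays lies in $\overline{\{f^n(0)\}}$ --- is not the standard statement and you give no justification; the paper instead appeals to \cite{kiwi}, \cite{th}, \cite{lfund}. The actual mechanism is Thurston's no-wandering-triangles and its consequences: for a quadratic lamination, a class with $\ge 3$ angles is either (pre)periodic or precritical. For $x\in X$ with $X\cap J_\infty=\emptyset$, the forward orbit of $x$ stays in $X$ and hence misses $0\in J_\infty$, so $x$ is not precritical; thus $x$ is (pre)periodic (finitely many rays) or wandering (at most two rays). That yields the finiteness you want; your stated fact, even if true for these maps, would itself have to be proved by exactly this argument.
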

\begin{remark}
The case that $f$ does have an irrational cycle seems to be open and requires a separate consideration, see \cite{Che} though.
Note also that Theorem \ref{thm:explc} removes the only restriction in Proposition 2.11 of \cite{BLy} for degree $2$ polynomials without irrational cycles.
\end{remark}
Theorem \ref{thm:explc} has been known for the following quadratic maps $f$.
If $f$ has an attracting cycle, then $f$ is hyperbolic and the whole $J(f)$ is locally-connected.
The same conclusion holds if $f$ has a parabolic cycle~\cite{DH}.
The first part of Yoccoz's result (see e.g.,~\cite{H}) says that $J(f)$ is locally-connected if
$f$ has no indifferent irrational cycles and at most finitely many times renormalizable.
This allows us to reduce the proof of Theorem \ref{thm:explc} to the case of $f$ as in Theorem \ref{thm:exp}, hence, by the latter,
to the case when $X$ is disjoint from $J_\infty$ in which case it is well-known that Yoccoz puzzle pieces shrink
to each point of $X$ \cite{Mi0}, \cite{lfund}. This shows that $J(f)$ is locally connected at points of $X$. The last claim follows then from \cite{kiwi}, see also \cite{th} and \cite{lfund}.

{\bf Acknowledgment.} We thank Weixiao Shen for helpful comments. We thank the referee for many useful remarks that helped to improve the paper.


\section{Preliminaries}\label{prel}
Here
we collect, for further references, necessary notations and general facts
which are either well-known \cite{mcm}, \cite{Mi1} or follow readily from the known ones.
Let $f(z)=z^2+c$ be infinitely renormalizable. We keep the notations of the Introduction.

{\bf (A)}. Let $G$ be the Green function of the basin of infinity $A(\infty)=\{z| f^n(z)\to \infty, n\to\infty\}$ of $f$ with the standard normalization
at infinity $G(z)=ln|z|+O(1/|z|)$. The external ray $R_t$ of argument $t\in {\bf S^1}={\bf R}/{\bf Z}$ is a gradient line to the level sets of $G$
that has the (asymptotic) argument $t$ at $\infty$.
$G(z)$ is called the (Green) level of $z\in A(\infty)$ and the unique $t$ such that $z\in R_t$
is called the (external) argument (or angle) of $z$.
A point $z\in J(f)$ is accessible if there is an external ray $R_t$ which lands at (i.e., converges to) $z$. Then $t$ is called an (external) argument (angle) of $z$.

Let $\sigma: {\bf S^1}\to {\bf S^1}$ be
the doubling map $\sigma(t)=2t(mod 1)$. Then $f(R_t)=R_{\sigma(t)}$.

{\bf (B)}.
Given a small Julia set $J_n$ containing $0$, sets $f^j(J_n)$ ($0\le j< p_n$) are called small Julia sets of level $n$. Each $f^j(J_n)$ contains $p_{n+1}/p_n\ge 2$ small Julia sets
$f^{j+k p_n}(J_{n+1})$, $0\le k<p_{n+1}/p_n$, of level $n+1$.
We have $J_n=-J_n$. Since all renormalizations are simple, for $j\neq 0$, the symmetric companion $-f^j(J_n)$ of $f^j(J_n)$ can intersect the orbit $orb(J_n)=\cup_{j=0}^{p_n-1}f^j(J_n)$ of $J_n$
only at a single point which is preperiodic.
On the other hand,
since only finitely many external rays converge to each periodic point of $f$, the set $J_\infty$ contains no periodic points. In particular,
each component $K$ of $J_\infty$ is wandering, i.e., $f^i(K)\cap f^j(K)=\emptyset$ for all $0\le i<j<\infty$.
All this implies that $\{x,-x\}\subset J_\infty$ if and only if $x\in K_0:=\cap_{n=1}^\infty J_n$.

Given $x\in J_\infty$, for every $n$, let $j_n(x)$ be the unique $j\in\{0,1,\cdots, p_n-1\}$ such that
$x\in f^{j(x)}(J_n)$. Let $J_{x,n}=f^{j_n(x)}(J_n)$ be a small Julia set of level $n$ containing $x$ and
$K_x=\cap_{n\ge 0}J_{x, n}$, a component of $J_\infty$ containing $x$.

In particular, $K_0=\cap_{n\ge 0}J_n$ is the component of $J_\infty$ containing $0$ and $K_c=\cap_{n=1}^\infty f(J_n)$, the component containing $c$.

The map $f:K_x\to K_{f(x)}$ is one-to-one if $x\notin K_0$ while $f:K_0\setminus \{0\}\to K_{c}\setminus \{c\}$ is two-to-one.
Moreover, for every $y\in J_\infty$, $f^{-1}(y)\cap J_\infty$ consists of two points if $y\in K_c\setminus \{c\}$ and consists of a single point
otherwise.

{\bf (C)}. Given $n\ge 0$, the map $f^{p_n}:f(J_n)\to f(J_n)$ has two fixed points:
the separating fixed point $\alpha_n$ (that is, $f(J_n)\setminus \{\alpha_n\}$ has at least two components) and
the non-separating $\beta_n$ (so that $f(J_n)\setminus\{\beta_n\}$ has a single
component).

For every $n>0$, there are two rays $R_{t_n}$ and $R_{\tilde t_n}$ ($0<t_n<\tilde t_n<1$)
to the non-separating fixed point $\beta_n\in f(J_n)$
of $f^{p_n}$ such that the component $\Omega_n$ of ${\bf C}\setminus (R_{t_n}\cup R_{\tilde t_n}\cup \beta_n)$
which does not contain $0$ has two characteristic properties:

(i) $\Omega_n$ contains $c$ and contains no the forward orbit
of $\beta_n$,

(ii) for every $1\le j\le p_n$, consider arguments (angles) of the the external rays which land at
$f^{j-1}(\beta_n)$.
The angles split ${\bf S^1}$ into finitely many arcs. Then the arc
$$S_{n,1}=[t_n, \tilde t_n]=\{t: R_t\subset \Omega_n\}$$
has the smallest length among all these arcs.

Denote
$$t_n'=t_n+\frac{\tilde t_n-t_n}{2^{p_n}}, \ \ \tilde t_n'=\tilde t_n-\frac{\tilde t_n-t_n}{2^{p_n}}.$$
The rays $R_{t_n'}$, $R_{\tilde t_n'}$ land at a common point $\beta_n'\in f^{-p_n}(\beta_n)\cap \Omega_n$.
Introduce an (unbounded) domain $U_n$ with the boundary to be two curves
$R_{t_n}\cup R_{\tilde t_n}\cup \beta_n$ and $R_{t_n'}\cup R_{\tilde t_n'}\cup \beta_n'$.
In other words, $U_n$ is a component of $f^{-p_n}(\Omega_n)$ which is contained in $\Omega_n$.
Then $c\in U_n$ and $f^{p_n}: U_n\to \Omega_n$ is a two-to-one branched covering so that
\begin{equation}\label{j1}
f(J_n)=\{z| f^{kp_n}(z)\in \overline U_n, G(f^{kp_n}(z))\le 10, k=0,1,...\}.
\end{equation}
Moreover, for any $n$, the closure
of $U_{n+1}$ is contained in
$U_n$.
We denote
$$s_{n,1}=[t_n, t_n']\cup [\tilde t_n', \tilde t_n].$$
Then $s_{n,1}\subset S_{n,1}$ and
$$\sigma^{p_n}: s_{n,1}\to S_{n,1}$$
so that $\sigma^{p_n}$ is a homeomorphism of each component of $s_{n,1}$ onto $S_{n,1}$. End points $t_n, \tilde t_n$ of $S_{n,1}$ are fixed points of $\sigma^{p_n}$.
It's important to note that $S_{n+1,1}\subset S_{n,1}$, $s_{n,1}\subset s_{n+1,1}$ for all $n$ and the length $(\tilde t_n-t_n)/2^{p_n}$ of each
of the two components of
$s_{n,1}$ tends to zero as $n\to\infty$ (while the length $|t_n-\tilde t_n|$ of $S_{n,1}$ can stay away from zero).

From now on, {\it given a compact set $Y\subset J(f)$ denote by $\tilde Y$
the set of arguments of the external rays which have their limit sets in $Y$.}

For each $k\ge 0$ the boundary of the set $\{z: f^{kp_n}(z)\in \overline U_{n,}\}$
consists of rays and (pre)periodic points.
It follows that if a ray has at least one limit point in $f(J_n)$ then all its limit points are in $f(J_n)$, and (\ref{j1}) implies that
\begin{equation}\label{tildej1}
\widetilde{f(J_n)}=\{t| \sigma^{j p_n}(t)\in s_{n,1}, j=0,1,...\}.
\end{equation}
So $\widetilde{f(J_n)}$ is a Cantor set, in particular, closed.
Let us show that
\begin{equation}\label{tildekc}
\tilde K_c=\cap_{n=1}^\infty s_{n,1}.
\end{equation}
Indeed, $t\in\tilde K_c$ implies $t\in\widetilde{f(J_n)}\subset s_{n,1}$, for each $n$.
Vice versa, let $t\in\cap_{n=1}^\infty s_{n,1}$. It is enough to show that $t\in\widetilde{f(J_n)}$ for each $n$
(which would indeed imply that the limit set of the ray $R_t$ belongs to $f(J_n)$ for all $n$, i.e., $t\in\tilde K_c$).
Fix $n$ and find a sequence $t_m\in\partial s_{m,1}$,
such that $t_m\to t$ as $m\to\infty$. On the other hand, $\partial s_{m,1}\subset\widetilde{f(J_m)}\subset\widetilde{f(J_n)}$
because $f(J_m)\subset f(J_n)$ for $m>n$. As $\widetilde{f(J_n)}$ is closed, $t\in \widetilde{f(J_n)}$. This proves (\ref{tildekc}).

It implies that $\tilde K_c$ is either a single-point set or
a two-point set. In particular, $K_c$ contains at most two different accessible points.
As $f: K_0\setminus \{0\}\to K_c\setminus \{c\}$ is two-to-one, $\tilde K_0=\sigma^{-1}(\tilde K_c)$ consists of either 2 or 4 points.

Let us give,
for completeness of the picture, a similar description of
$\tilde K$ for each
component $K$ of $J_\infty$ (Lemma \ref{tilde}, see below).
It will be needed for the proof of Lemma \ref{2p}, part (ii). Note, however, that part (ii) of Lemma \ref{2p} is not used in the proof of the main result.

Let
$$s_{n,j}=\sigma^{j-1}(s_{n,1})=[t_{n,j}, t_{n,j}']\cup [\tilde t_{n,j}', \tilde t_{n,j}], 1\le j\le p_n,$$
where $t_{n,j}=\sigma^{j-1}(t_n)$, $\tilde t_{n,j}=\sigma^{j-1}(\tilde t_n)$, $t_{n,j}'=\sigma^{j-1}(t_n')$, and $\tilde t_{n,j}'=\sigma^{j-1}(\tilde t_n')$.
Then
\begin{equation}\label{wind}
t_{n,j}'-t_{n,j}=\tilde t_{n,j}-\tilde t_{n,j}'=\frac{\tilde t_n-t_n}{2^{p_n-j+1}}<\tilde t_n-t_n<1/2.
\end{equation}
So $\sigma^{j-1}: s_{n,1}\to s_{n,j}$ is a homeomorphism and $s_{n,j}$ has two components ('windows') $[t_{n,j}, t_{n,j}']$ and $[\tilde t_{n,j}', \tilde t_{n,j}]$ of equal length.
However, $\sigma^j(S_{n,1})$ can cover the whole circle ${\bf S^1}$ for some $j<p_n$.
For this reason, an analogue of (\ref{j1}) breaks down if $j$ is big.
For $j=1,2,...,p_n$, let $U_{n,j}=f^{j-1}(U_n)$ and $\beta_{n,j}=f^{j-1}(\beta_n)$. The domain $U_{n,j}$ is bounded by
two rays $R_{t_{n,j}}\cup R_{\tilde t_{n,j}}$ converging to $\beta_{n,j}$ and completed by $\beta_{n,j}$ along with two rays
$R_{t_{n,j}'}\cup R_{\tilde t_{n,j}'}$ completed by their common limit point $f^{j-1}(\beta_n')$.
Let $U^1_{n,j}$ be a component of $f^{-(p_n-j+1)}(U_n)$ which is contained in $U_{n,j}$.
Then
\begin{equation}\label{mapU}
f^{p_n}: U^1_{n,j}\to U_{n,j}
\end{equation}
is a two-to-one branched covering,
and
\begin{equation}\label{jany}
f^{j-1}(J_n)=\{z| f^{kp_n}(z)\in \overline U^1_{n, j}, G(f^{kp_n}(z))\le 10, k=0,1,...\}.
\end{equation}
Note that this is consistent with (\ref{j1}) for $j=1$.
Similar to $j=1$, if a ray has at least one limit point in $f^{j-1}(J_n)$ then all its limit points are there.

Let $s^1_{n,j}$ be the set of arguments of rays entering $\overline U^1_{n,j}$. Then $s^1_{n,j}\subset s_{n,j}$ consists of two pairs of components
($1$-'windows') where each
pair is adjacent to two end points of one of the 'windows' of $s_{n,j}$. Moreover, $\sigma^{p_n}: s^1_{n,j}\to s_{n,j}$ is a two-to-one covering
which maps each of four $1$-'windows' of $s^1_{n,j}$ homeomorphically onto one of the two 'windows' of $s_{n,j}$.
Correspondingly, for each $j=1,2,...,p_n$,
\begin{equation}\label{tildejany}
\widetilde{f^j(J_n)}=\{t|\sigma^{k p_n}(t)\in s^1_{n,j}, k=0,1,...\}.
\end{equation}
As (\ref{jany}) is consistent with (\ref{j1}) for $j=1$, (\ref{tildejany}) is consistence with (\ref{tildej1}) for $j=1$.

Given $n,j$, denote by $\Delta_{n,j}$ the length of each 'window' of $s_{n,j}$ and by
$\Delta^1_{n,j}$ the length of each 'window' of $s^1_{n,j}$. By (\ref{wind}), $\Delta_{n,j}=\frac{\tilde t_n-t_n}{2^{p_n-j+1}}$
and $\Delta^1_{n,j}=\frac{\Delta_{n,j}}{2^{p_n}}$.
So $\Delta^1_{n,j}<2^{-p_n}\to 0$ uniformly in $j$ as $n\to\infty$.

Let $K$ be a component of $J_\infty$:
$$K=\cap_{n=1}^\infty f^{j_n}(J_n),$$
where $1\le j_n\le p_n$ and $f^{j_{n+1}}(J_{n+1})\subset f^{j_n}(J_n)$.
\begin{lemma}\label{tilde}
\begin{equation}\label{tildek}
\tilde K=\cap_{n=1}^\infty s^1_{n,j_n}
\end{equation}
where $\{s^1_{n,j_n}\}_{n=1}^\infty$ is a decreasing sequence of compacts each consisting of
four $1$-'windows' with equal lengths tending to zero as $n\to\infty$. In particular,
$\tilde K$ consists of at most $4$ points.
There is an alternative (1)-(2):
\begin{enumerate}
\item either $p_n-j_n\to\infty$ as $n\to\infty$ so that the length of each 'window' of $s_{n,j_n}$ tends to zero,
moreover,
\begin{equation}\label{tildek1}
\tilde K=\cap_{n=1}^\infty s_{n,j_n}
\end{equation}
so that $\# \tilde K\in\{1,2\}$ in this case.
\item there is $N\ge 0$ such that $p_n-j_n=N$ for all $n$, so that $f^N(K)=K_0$.
\end{enumerate}

\end{lemma}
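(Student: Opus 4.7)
The plan is to prove (\ref{tildek}) by sandwiching $\tilde K$ between two comparable sets of angles, then to read off $\#\tilde K\le 4$ from the vanishing of the component-lengths $\Delta^1_{n,j_n}$, and finally to split into the two cases by tracking the arithmetic of the indices $j_n$ against the periods $p_n$.

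First, directly from the definition of $\tilde Y$, from $K=\bigcap_n f^{j_n}(J_n)$, and from the all-or-nothing landing property recorded just before (\ref{tildejany}), one has $\tilde K=\bigcap_n\widetilde{f^{j_n}(J_n)}$. Each $\widetilde{f^{j_n}(J_n)}$ is a nonempty closed Cantor set (see after (\ref{tildej1})) and the sequence is decreasing in $n$, so Cantor's nested intersection theorem already gives $\tilde K\neq\emptyset$. Setting $k=0$ in (\ref{tildejany}) yields $\widetilde{f^{j_n}(J_n)}\subset s^1_{n,j_n}$, hence $\tilde K\subset\bigcap_n s^1_{n,j_n}$. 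The fact that $\{s^1_{n,j_n}\}$ itself is a decreasing sequence then follows from the nesting $\overline{U^1_{n+1,j_{n+1}}}\subset\overline{U^1_{n,j_n}}$ of the puzzle pieces around the respective small Julia sets.

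For the reverse inclusion I would imitate the argument already given in the excerpt for (\ref{tildekc}): let $t\in\bigcap_m s^1_{m,j_m}$, fix $n$, and approximate $t$ by a sequence $t_m\in\partial s^1_{m,j_m}$, which is possible because every component of $s^1_{m,j_m}$ has length $\Delta^1_{m,j_m}<2^{-p_m}\to 0$. The boundary angles of $s^1_{m,j_m}$ are preperiodic angles landing on $\partial U^1_{m,j_m}$, i.e., at iterates of $\beta_m$ and $\beta_m'$, all of which belong to $f^{j_m}(J_m)\subset f^{j_n}(J_n)$; therefore $t_m\in\widetilde{f^{j_n}(J_n)}$, and closedness of the latter forces $t\in\widetilde{f^{j_n}(J_n)}$ for every $n$, i.e., $t\in\tilde K$. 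The bound $\#\tilde K\le 4$ is then a pure pigeonhole: any five distinct points of $\tilde K\subset s^1_{n,j_n}$ must include a pair sitting in a single one of the four components of $s^1_{n,j_n}$ for infinitely many $n$, and $\Delta^1_{n,j_n}\to 0$ then forces that pair to coincide.

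For the dichotomy I would use $f^{p_n}(J_n)=J_n$ and $f^{j_{n+1}}(J_{n+1})\subset f^{j_n}(J_n)$ to write $j_{n+1}=j_n+k_n p_n$ with $k_n\in\{0,\dots,q_n-1\}$ and $q_n=p_{n+1}/p_n$, so that
\begin{equation*}
p_{n+1}-j_{n+1}=(q_n-k_n)p_n-j_n,
\end{equation*}
which equals $p_n-j_n$ if $k_n=q_n-1$ and is $\ge p_n$ otherwise. Hence either $k_n=q_n-1$ for all $n$ large and $p_n-j_n$ stabilizes at some $N$, in which case $f^N(K)=\bigcap_n J_n=K_0$ (case (2)); or $p_n-j_n\to\infty$, and then (\ref{wind}) gives $\Delta_{n,j_n}\to 0$. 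In the latter case the same pigeonhole applied to $\tilde K\subset s_{n,j_n}$ (only two components) with three points yields $\#\tilde K\le 2$, and (\ref{tildek1}) follows because $\bigcap_n s_{n,j_n}$ lies within Hausdorff distance $\Delta_{n,j_n}$ of $\bigcap_n s^1_{n,j_n}=\tilde K$, which vanishes in the limit. The main obstacle I anticipate is the ray-landing step above: verifying via the branched covering (\ref{mapU}) that the rays forming $\partial\overline{U^1_{m,j_m}}$ land at iterates of $\beta_m$ and $\beta_m'$, and thence that $\partial s^1_{m,j_m}\subset\widetilde{f^{j_m}(J_m)}$.
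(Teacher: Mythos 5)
Your proof is correct, and for equation (\ref{tildek}) you fill in precisely the argument the paper explicitly leaves to the reader as ``very similar to the proof of (\ref{tildekc})'': the forward inclusion from (\ref{tildejany}) with $k=0$, and the reverse inclusion by approximating $t\in\bigcap_m s^1_{m,j_m}$ by boundary angles $t_m\in\partial s^1_{m,j_m}\subset\widetilde{f^{j_m}(J_m)}\subset\widetilde{f^{j_n}(J_n)}$ and using closedness; the cardinality bound and (\ref{tildek1}) in case (1) then follow from the vanishing of $\Delta^1_{n,j_n}$ and $\Delta_{n,j_n}$ exactly as you say.

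For the dichotomy your route is genuinely different from the paper's. The paper argues dynamically: if $p_{n'}-j_{n'}=N$ along an infinite subsequence, it writes $f^N(K)=\bigcap_{n'}f^{p_{n'}}(J_{n'})=\bigcap_n J_n=K_0$, and then deduces $p_n-j_n=N$ for all $n$ from the equation $f^N(K)=K_0$ (which forces $p_n\mid j_n+N$); otherwise $p_n-j_n\to\infty$. You instead extract the recursion $j_{n+1}=j_n+k_np_n$, $k_n\in\{0,\dots,q_n-1\}$, from (B), deduce that $(p_n-j_n)$ is non-decreasing with jumps of size at least $p_n$, and read off the alternative directly. Your arithmetic version is cleaner and also makes visible a small imprecision in the paper's wording: if the sequence $(p_n-j_n)$ jumps at some index $m$ before stabilizing at $N$, then $N\ge p_m$ and $p_n-j_n<N$ for $n\le m$, so ``$p_n-j_n=N$ for all $n$'' should really read ``for all $n$ large enough'' (equivalently, $p_n\mid j_n+N$ for all $n$), which is exactly what you wrote. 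This does not affect anything downstream, since only the conclusion $f^N(K)=K_0$ and the alternative itself are used in the proof of Lemma~\ref{2p}(ii).
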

\begin{proof} (\ref{tildek}) is very similar to the proof of (\ref{tildekc}) and is left to the reader.
As for the alternative (1)-(2), assume that there is $N\ge 0$ such that, for an infinite subsequence $(n')\subset\N$, $p_{n'}-j_{n'}=N$. Then
$f^N(K)=\cap_{n'} f^{n'}(J_{n'})=\cap_{n} f^{n}(J_n)=K_0$, hence, $p_n-j_n=N$ for all $n$. This explains why
either $p_n-j_n\to\infty$ or $p_n-j_n=N$ for some $N\ge 0$ and all $n$. Consider the case $p_n-j_n\to\infty$.
Then by (\ref{wind}) the length $\Delta_{n,j_n}$ of each 'window' of $s_{n,j_n}$ tends to zero uniformly in $j_n$.
Repeating again the proof of (\ref{tildekc}) we get that
$\cap_{n=1}^\infty s_{n,j_n}=\cap_{n=1}^\infty s'_{n,j_n}$.
This settles the alternative.
\end{proof}


{\bf (D1)}. Consider in more detail the case when  $\tilde K_c=\{\tau_1, \tau_2\}$, $\tau_1\neq\tau_2$.
Let $S_c$ be the shortest arc in ${\bf S^1}$ with the end points $\tau_1$, $\tau_2$. It follows from (C):

$1_\infty$: $\sigma^k(\tau_i)\notin S_c$, for $i=1,2$ and all $k>0$,

$2_\infty$: for each $k>0$, the length of arcs with the end points $\sigma^k(\tau_1)$, $\sigma^k(\tau_2)$
is bigger than or equal to the length of $S_c$,

$3_\infty$: (unlinking) for each positive $j\not=k$, one of the two arcs
${\bf S^1}\setminus \{\sigma^k(\tau_1), \sigma^k(\tau_2)\}$
contains both points $\sigma^j(\tau_1), \sigma^j(\tau_2)$.
Furthermore, as $\tilde K_0=\sigma^{-1}(\{\tau_1,\tau_2\})$, the set $\tilde K_0$ splits the unit circle ${\bf S^1}$
into $4$ arcs in such a way that, for each $j\ge 0$, both points $\sigma^j(\tau_1), \sigma^j(\tau_2)$ have to lie in one and only one of these arcs.
In particular, $\sigma^j(\tau_1), \sigma^j(\tau_2)$ lie in one and only one of semicircles
${\bf S^1}\setminus\sigma^{-1}(\tau_i)$, for each $i=1,2$.
\begin{remark}\label{lamin}
Let us put the 'unlinking' property in the context of Thurston's 'laminations' \cite{th}. Consider a topological model
of $J(f)$ by shrinking all components of $J_\infty$ as well as all their preimages to points.
More formally, let's build a lamination (a special equivalent relation on ${\bf S^1}$) as follows \cite{th}, \cite{kiwi}:
as all periodic points of $f$ are repelling, each (pre)periodic point is a landing point of at least one and at most finitely many rays.
Let us identify arguments of such rays for each such point and take a closure of this partial relation to the whole ${\bf S^1}$.
The resulting relation is invariant under the map $\sigma: {\bf S^1}\to {\bf S^1}$.
For visualization, this relation is usually extended to the closed unit disk by taking the closed convex hull in the Euclidean plane of each equivalence class.
Then obviously different convex hulls (classes) are disjoint.
For example, $\sigma^{-1}(\{\tau_1,\tau_2\})$ is a single class, and
$\sigma^j(\tau_1), \sigma^j(\tau_2)$ is the other one, for each $j\ge 0$, so their convex hulls are disjoint.
The property $\# \tilde K\le 2$ (which is proved in Lemma \ref{tilde}) for any component $K$ of $J_\infty$ other than preimages of $K_0$
is, in fact, a very particular case of the fundamental 'no wandering triangle' property of unicritical laminations
\cite{th}, \cite{lfund}.
\end{remark}

{\bf (D2)}. Given $\nu\in [0,1)$ there exists a unique {\it minimal} rotation set $\Lambda_{\nu}\subset {\bf S^1}$ of
$\sigma: {\bf S^1}\to {\bf S^1}$ with the rotation number $\nu$ \cite{BS}. Recall that a closed subset $\Lambda$ of ${\bf S^1}$ is a rotation set of $\sigma$ with the rotation number $\nu$ if $\sigma(\Lambda)\subset\Lambda$ and $\sigma :\Lambda\to\Lambda$ extends to a map of ${\bf S^1}$ which lifts to an orientation preserving non-decreasing
continuous map $F: {\bf R}\to {\bf R}$ with $F-\id$ to be $1$-periodic and the fractional part of the rotation number of $F$ to be equal to $\nu$.
Then \cite{BS} $\nu$ is irrational if and only if $\Lambda_\nu$ is infinite, in this case there is a unique closed semi-circle containing $\Lambda_\nu$ so that the end points of this semi-circle
belong to $\Lambda_\nu$. Finally, any closed $\sigma$-invariant set $\Lambda\subset {\bf S^1}$ which is contained in a semi-circle is a rotation set of $\sigma$.

\section{Accessibility}\label{access}
We define a {\it telescope} following essentially~\cite{Pr}.
Given $x\in J(f)$, $r>0$, $\delta>0$, $k\in {\bf N}$ and $\kappa\in (0,1)$, an
$(r, \kappa, \delta, k)$-telescope at $x\in J$ is collections of times
$0=n_0<n_1<...<n_k=n$ and disks $B_l=B(f^{n_l}(x), r)$, $l=0,1,...,k$ such that,
for every $l>0$:
(i) $l/n_l>\kappa$, (ii) there is a univalent branch $g_{n_l}: B(f^{n_l}(x), 2r)\to {\bf C}$ of $f^{-n_l}$
so that $g_{n_l}(f^{n_l}(x))=x$ and, for $l=1,...,k$, $\dist(f^{n_{l-1}}\circ g_{n_l}(B_l), \partial B_{l-1})>\delta$
(clearly, here $f^{n_{l-1}}\circ g_{n_l}$ is a branch of $f^{-(n_l-n_{l-1})}$ that maps $f^{n_l}(x)$ to
$f^{n_{l-1}}(x)$).
The trace of the telescope is a collection of sets $B_{l,0}=g_{n_l}(B_l)$, $l=0,1,...,k$.
We have: $B_{k,0}\subset B_{k-1,0}\subset...\subset B_{1,0}\subset B_{0,0}=B_0=B(x,r)$.
By the {\it first point of intersection} of a ray $R_t$, or an arc of $R_t$, with a set $E$ we mean a point of $R_t\cap E$ with
the minimal level (if it exists).
\begin{theorem}\label{tele}\cite{Pr}
Given $r>0$, $\kappa\in (0,1)$, $\delta>0$ and $C>0$ there exist
$M>0$,
$\tilde l,\tilde k\in \N$ and $K>1$
such that for every $(r, \kappa, \delta, k)$-telescope the following hold.
Let $k>\tilde k$. Let $u_0=u$ be any point at the boundary of $B_k$ such that $G(u)\ge C$.
Then there are indexes $1\le l_1<l_2<...<l_j=k$ such that $l_1<\tilde l$,
$l_{i+1}<K l_i$, $i=1,...,j-1$ as follows.
Let $u_k=g_{n_k}(u)\in \partial B_{k,0}$ and
let $\gamma_k$ be an infinite arc of
an external ray through $u_k$
between the pint $u_k$ and $\infty$.
Let $u_{k,k}=u_k$ and, for $l=1,...,k-1$,
let $u_{k,l}$ be the first point of intersection of
$\gamma_k$ with $\partial B_{l,0}$.
Then, for $i=1,...,j$,
$$G(u_{k,l_i})>M 2^{-n_{l_i}}.$$
\end{theorem}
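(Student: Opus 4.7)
The plan is to use a pull-back via $f^{n_l}$ together with a greedy descent from $l_j = k$.

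\emph{Pull-back reformulation.} Set $v_l := f^{n_l}(u_{k,l}) \in \partial B_l$ and $w_l := f^{n_l}(u_k) \in B_l$. The restriction $f^{n_l}|_{B_{l,0}}$ is a homeomorphism onto $B_l$ (with inverse $g_{n_l}$), and the identity $G \circ f = 2\,G$ gives $G(v_l) = 2^{n_l} G(u_{k,l})$. Hence the target $G(u_{k,l_i}) > M\, 2^{-n_{l_i}}$ is equivalent to the scale-free bound $G(v_{l_i}) > M$. The factorization $g_{n_l} = g_{n_{l-1}} \circ \phi_l$ with $\phi_l := f^{n_{l-1}} \circ g_{n_l}$ univalent on $2B_l$ (from condition (ii)) yields by induction $w_l = (\phi_{l+1} \circ \cdots \circ \phi_k)(u)$; the $\delta$-nesting in (ii) propagates to $\dist(w_l, \partial B_l) > \delta$. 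The arc of $\gamma_k$ from $u_k$ to $u_{k,l}$ inside $B_{l,0}$ then maps under $f^{n_l}$ to an arc of the external ray through $w_l$ running from $w_l$ to $v_l$, along which $G$ increases monotonically from $G(u)\, 2^{n_l - n_k}$ to $G(v_l)$.

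\emph{Greedy descent via a key lemma.} The crux is the following lemma: there exist $\tilde l \in \N$, $K > 1$, and $M > 0$ depending only on $r,\kappa,\delta,C$ such that for every $l \in (\tilde l, k]$ some $l' \in (l/K, l]$ has $G(v_{l'}) > M$, and some $l'' < \tilde l$ has $G(v_{l''}) > M$. Taking this for granted, set $l_j := k$; since $G(v_k) = G(u) \ge C$, choosing $M \le C$ handles the terminal step. Recursively pick $l_i \in (l_{i+1}/K, l_{i+1})$ with $G(v_{l_i}) > M$, halting when $l_i < \tilde l$, which becomes $l_1$. The constraints $l_1 < \tilde l$ and $l_{i+1} < Kl_i$ hold by construction, and the first paragraph converts $G(v_{l_i}) > M$ back to the original $G(u_{k,l_i}) > M\, 2^{-n_{l_i}}$. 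Set $\tilde k := \tilde l$ to ensure a nontrivial descent.

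\emph{Strategy for the key lemma and main obstacle.} Argue by contradiction: if $G(v_{l'}) \le M$ throughout a window $(l/K, l]$ with $l > \tilde l$, then for each such $l'$ the arc from $w_{l'}$ to $v_{l'}$ in $B_{l'}$ (of Euclidean length $\ge \delta$) lies entirely in $\{G \le M\} \cap B_{l'}$. Using the univalent pullbacks $\Phi_{l,l'} := \phi_{l+1} \circ \cdots \circ \phi_{l'} : 2B_{l'} \to \C$, each of bounded Koebe distortion with image in $B_l$ at distance $> \delta$ from $\partial B_l$, transport these arcs into $B_l$; the relation $G \circ \Phi_{l,l'} = G/2^{n_{l'}-n_l}$ keeps them inside $\{G \le M\}$. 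Condition (i), $l/n_l > \kappa$, bounds $n_{l'} - n_l$ linearly in $l'-l$, yielding on the order of $(K-1)l/K$ simultaneously confined ray arcs of pairwise distinct external arguments. A Koebe-type area/length comparison (or a Beurling extremal-length estimate) for $\{G \le M\}$ inside $B_l$ precludes this once $K$ is large and $M$ is small in terms of $r,\kappa,\delta$. The existence of $l'' < \tilde l$ is handled by the same argument, using that the finitely many balls $B_{l,0}$ with $l < \tilde l$ have uniformly controlled geometry. The main obstacle is precisely this quantitative geometric contradiction: extracting it requires careful bookkeeping of how Koebe distortion accumulates through the telescope and how narrow sublevel sets of $G$ near $J(f)$ can host many long ray arcs of distinct arguments.
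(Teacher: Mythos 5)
The statement is taken verbatim from \cite{Pr}; the present paper does not reproduce a proof, so there is nothing internal to compare against. Assessing the proposal on its own terms:

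Your pull-back reformulation is correct: with $v_l=f^{n_l}(u_{k,l})$ one indeed has $G(v_l)=2^{n_l}G(u_{k,l})$, so the desired estimate is equivalent to $G(v_{l_i})>M$, and the greedy-descent scheme would follow from your stated key lemma, modulo the routine reduction $\tilde k:=\tilde l$ and $M\le C$. The problem is that the key lemma carries the entire content of the theorem, and the sketch you give for it contains a concrete error. The maps $\Phi_{l,l'}$ telescope to $\Phi_{l,l'}=f^{n_l}\circ g_{n_{l'}}$, and the arc in $B_{l'}$ from $w_{l'}$ to $v_{l'}$ is $f^{n_{l'}}\bigl(\gamma_k|_{[u_k,\,u_{k,l'}]}\bigr)$; since $g_{n_{l'}}\circ f^{n_{l'}}=\mathrm{id}$ on $B_{l',0}$, the transported arc is simply $f^{n_l}\bigl(\gamma_k|_{[u_k,\,u_{k,l'}]}\bigr)$. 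As $l'$ ranges over the window, these are \emph{nested sub-arcs of one and the same external ray}, all issuing from the common point $w_l=f^{n_l}(u_k)$, with the endpoints $f^{n_l}(u_{k,l'})$ marching monotonically along that ray. They are not ``ray arcs of pairwise distinct external arguments,'' so there is no family of order $(K-1)l/K$ distinct rays confined to $\{G\le M\}\cap B_l$, and the intended extremal-length/area contradiction evaporates. (A single ray arc of length $\ge\delta$ in $\{G\le M\}\cap B_l$ is \emph{not} a contradiction for small $M$: an external ray can run at a low $G$-level parallel to a non-degenerate piece of $J(f)$ for a long Euclidean distance.) You yourself flag that ``extracting it requires careful bookkeeping,'' but the missing piece is not bookkeeping; the mechanism you propose for generating many distinct rays does not exist, and some genuinely different idea (this is where \cite{Pr} does its real work, via harmonic-measure/modulus estimates along the telescope) is needed to establish the lower bound $G(v_{l'})>M$ somewhere in each window. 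As it stands the proposal is an accurate reduction of the theorem to an unproved lemma, together with an incorrect heuristic for that lemma.
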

Applying Theorem~\ref{tele} as in~\cite{Pr} (where the existence of a ray to $x$ is proved assuming
$(r, \kappa, \delta, \infty)$-telescope to $x$) we obtain the following statement. See also~\cite{BLy} for a direct proof of part (1).
\begin{coro}\label{accesscoro}
Let $X$ be a hyperbolic set for $f$.
(1) To every point $x\in X$ one can assign a non-empty set $A_x\subset {\bf S}^1$ such that
for every $t\in A_x$ the external ray $R_{t}$ lands at $x$.
(2) The set $\mathcal{A}=\{(x,t): x\in X, t\in A_x\}$ is closed in $\C\times {\bf S}^1$.
(3) Moreover, for each $\mu>0$ there is $C(\mu)>0$ such that for all $x\in X$ and all $t\in A_x$, the first intersection of $R_t$ with $\partial B(x,\mu)$ has the level at least $C(\mu)$.
\end{coro}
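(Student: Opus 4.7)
My approach is to construct, for each $x \in X$, an $(r, \kappa, \delta, k)$-telescope whose parameters $r, \kappa, \delta$ are independent of $x$ and $k$ is arbitrary, apply Theorem~\ref{tele} to obtain external rays with controlled Green levels, and then let $k \to \infty$. The first step is to verify that such uniform telescopes exist. Since $X$ is hyperbolic, the standard fact that a hyperbolic set admits uniformly sized univalent inverse branches along its orbits applies: there is $r > 0$ such that, for every $x \in X$ and every $n \ge 0$, $f^n$ admits a univalent inverse branch $g_n$ on $B(f^n(x), 2r)$ with $g_n(f^n(x)) = x$. By Koebe distortion and the uniform expansion along $X$, $\diam g_l(B(f^l(x), r)) \le C_0 \lambda^{-l}$ for constants $C_0, \lambda > 1$ independent of $x$. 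Setting $n_l = l$ makes condition (i) trivial for any $\kappa < 1$, and the exponential contraction places $g_l(B(f^l(x), r))$ well inside $B(f^{l-1}(x), r)$ with a definite gap $\delta$, giving (ii). Hence every $x \in X$ admits an $(r, \kappa, \delta, k)$-telescope for every $k$.

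Next, fix $C > 0$ and for each $x, k$ choose a point $u = u_0$ on $\partial B_k$ at level $C$. Theorem~\ref{tele} produces a ray $R_{t(x,k)}$ through $u_k = g_{n_k}(u) \in \partial B_{k,0}$ and a sparse sequence $l_1 \le \tilde l$, $l_{i+1} \le K l_i$, with $G(u_{k,l_i}) > M 2^{-n_{l_i}}$. Because $\diam B_{l,0} \le C_0 \lambda^{-l}$ uniformly in $x$, for every $\mu > 0$ there is $l(\mu)$, independent of $x$ and $k$, with $B_{l,0} \subset B(x, \mu)$ whenever $l \ge l(\mu)$. The doubling $l_{i+1} \le K l_i$ then bounds the first $l_i$ exceeding $l(\mu)$ by some uniform $l^*(\mu)$, yielding the lower bound $C(\mu) := M 2^{-l^*(\mu)}$ on the Green level at which $R_{t(x,k)}$ first enters $B(x, \mu)$; this is already the statement of~(3). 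Extracting a subsequential limit $t(x, k) \to t$ as $k \to \infty$, these level bounds at the sparse scales $l_i$ combined with the geometric contraction of $B_{l,0}$ force the limit ray $R_t$ to land at $x$ by the argument of~\cite{Pr}. Defining $A_x$ to be the set of all such subsequential limit arguments yields non-emptiness~(1).

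For~(2), let $(x_n, t_n) \in \mathcal{A}$ with $(x_n, t_n) \to (x, t)$. By~(3), the first intersection $v_n$ of $R_{t_n}$ with $\partial B(x_n, \mu)$ has $G(v_n) \ge C(\mu)$ for every $\mu > 0$. Continuity of external rays in the argument on the compact equipotential region $\{G \ge C(\mu)\}$ allows $v_n$ to subconverge to a point $v \in R_t$ with $G(v) \ge C(\mu)$ and $\dist(v, x) \le \mu + o(1)$; letting $\mu \to 0$ transfers the uniform level-depth control from $\{R_{t_n}\}$ to $R_t$ at $x$, so $R_t$ lands at $x$ by the argument of the previous step and $(x, t) \in \mathcal{A}$. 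The main obstacle throughout is promoting the \emph{sparse} lower bounds $G(u_{k, l_i}) > M 2^{-n_{l_i}}$ to honest \emph{landing} rather than mere accumulation; this uses the doubling $l_{i+1} \le K l_i$ to keep the ray under control between the marked scales and is exactly the technical core inherited from Theorem~\ref{tele} and~\cite{Pr}. Ensuring that the constants are uniform over $X$ is then a routine consequence of the uniform expansion defining hyperbolicity.
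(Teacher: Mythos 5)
Your construction of telescopes from hyperbolicity, the application of Theorem~\ref{tele}, the derivation of~(3) from the sparse-scale bounds $G(u_{k,l_i})>M2^{-n_{l_i}}$, and the passage to a limit ray that lands at $x$ all track the paper's proof closely. The one substantive discrepancy is in the definition of $A_x$ and, consequently, in part~(2). You define $A_x$ as the set of subsequential limits of the angles $t(x,k)$ as $k\to\infty$ \emph{with $x$ held fixed}. The paper instead defines $A_x$ via ``diagonal'' limits: $t\in A_x$ iff there exist $x_m\in X$, $x_m\to x$, and $k_m\to\infty$ with $t_{k_m}(x_m)\to t$. That broader definition makes closedness of $\mathcal{A}$ essentially automatic (a diagonal extraction), whereas with your narrower definition the final step of your argument for~(2) does not close: you show that $R_t$ lands at $x$ with the uniform level bound, and then assert $(x,t)\in\mathcal{A}$, but landing at $x$ is not the same as $t$ being a subsequential limit of the fixed-$x$ angles $t(x,k)$. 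The initial points $u(x)\in\partial B(x,r)$ are chosen with no continuity in $x$, so there is no reason the diagonal limit set should coincide with the fixed-$x$ limit set. The gap is easy to repair by adopting the diagonal definition of $A_x$ (your non-emptiness argument still applies, taking $x_m=x$), but as written the closedness claim is not justified.

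Separately, in the landing argument you rely on ``the argument of~\cite{Pr}'' to pass from the sparse-scale Green-level bounds to actual landing; this is the same move the paper makes via its Lemma~\ref{cl0}, which spells out the key point that arcs of $R_{t_{k_m}(x_m)}$ between levels $C$ and $C(\mu)$ cannot exit $B(y,\mu)$, so the limiting arc of $R_\tau$ stays inside. You should make that intermediate control explicit rather than citing it, since it is precisely what promotes the sparse bounds to landing and is also what is needed in your closedness step.
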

\begin{proof}
As $f:X\to X$ is expanding,
there exist $\lambda>1$, $\rho>0$ and $j_0$ such that, for every $x\in X$ and every $k>0$,
there exists a (univalent) branch $g_{k,x}: B(f^{kj_0}(x),\rho)\to {\bf C}$ of $f^{-kj_0}$ with
$g_{k,x}(f^{kj_0}(x))=x$ and $|g_{k,x}'(y)/g_{k,x}'(z)|<2$ for $y,z\in B(f^{kj_0}(x),\rho)$. Moreover,
$|g_{k,x}'(z)|<\lambda^{-k}$ for all $k>0$ and $x\in X$.
Therefore, there are $r>0$, $\delta>0$
and $\kappa=1/j_0$ such that
for any $k>1$, every point $x\in X$ admits an $(r, \kappa, \delta, k)$-telescope with $n_k=kj_0$.
In fact, $n_i=ij_0$
for $i=0,1,\cdots,k$. Let $B_{k,0}(x)\subset B_{k-1,0}(x)\subset\cdots\subset B_{1,0}(x)\subset B_{0,0}(x)$ be the corresponding trace.
Define $C_0=\inf_{y\in J(f)} \max_{z\in B(y, r)} G(z)$. It is easy to see that $C_0>0$.
For each $x\in X$ we choose a point $u(x)\in \partial B(x,r)$ such that $G(u(x))\ge C_0$.
By Theorem~\ref{tele}, there are $\tilde l$ and $\tilde k$ such that for each $k>\tilde k$ and each $x\in X$ the following hold. There are $1\le l_{1,k}(x)<l_{2,k}(x)<\cdots<l_{j^x_k,k}(x)=k$ such that
$l_{1,k}(x)<\tilde l$, $l_{i+1,k}(x)<K l_{i,k}(x)$, $i=1,\cdots,j^x_k-1$
. Let $\gamma_k(x)$ be an arc of an external ray between the point $u_k(x)=g_{k,x}(u(f^{kj_0}(x))$ and $\infty$.
Let $u_{k,l}(x)$ be the first intersection of $\gamma_k(x)$ with
$\partial B_{l,0}(x)$. Then, for $i=1,\cdots,j^x_k-1$,
\begin{equation}\label{level}
G(u_{k,l_{i,k}}(x))>M 2^{-l_{i,k}(x) j_0}.
\end{equation}
For all $i=1,\cdots,j^x_k-1$,
\begin{equation}\label{acc1}
i\le l_{i,k}(x)<K^i \tilde l.
\end{equation}
Denote by $t_k(x)$ the argument of an external ray that contains the arc $\gamma_k(x)$.
It is enough to prove in the situation above
\begin{lemma}\label{cl0}
(i) If $(x_m)_{m>0}\subset X$, $x_m\to y$ and $t_{k_m}(x_m)\to \tau$ for some $k_m\to \infty$, then $R_\tau$ lands at $y$. (ii) Moreover, for each $\mu>0$ there is $C(\mu)>0$ such that for all pairs $(y, \tau)$ like this the first intersection of $R_\tau$ with $\partial B(y,\mu)$ has the level at least $C(\mu)$.
\end{lemma}
Indeed, assuming this lemma, we can define $A_x$ as the set of all angles $t$ so that there exist $x_m\in X$ and $k_m\to \infty$ with $x_m\to x$ and $t_{k_m}(x_m)\to t$.
The set $A_x$ is not empty because one can take $x_m=x$ for all $m$ and $k_m\to\infty$ such that $\{t_{k_m}(x)\}$ converges.
By Lemma \ref{cl0}(i),
the ray $R_t$ indeed lands at the point $x$.
It is then an elementary exercise to check that the set $\mathcal{A}$ is closed. Claim (ii) implies obviously the part (3).

So, let's prove Lemma \ref{cl0}. Let $(y,\tau)$ be as in the lemma. Pick any $\mu\in (0,r)$. Fix $l_0$ so that $\lambda^{-l_0} r<\mu/2$ and let
\begin{equation}\label{acc2}
C(\mu)=\frac{M}{2^{\tilde l j_0 K^{l_0}}}.
\end{equation}
There is $m_0$ such that for all $m>m_0$ and all $l>l_0$,
$B_{l_0,0}(x_m)\subset B(y,\mu)$. Then, by (\ref{level})-(\ref{acc1}), for every $m>m_0$,
\begin{equation}\label{firstint}
G(u_{k_m, l_{l_0,k_m}}(x_m))>C(\mu).
\end{equation}
Hence, for every $m>m_0$ the first intersection of $\gamma_{k_m}(x_m)$ with $B(y,\mu)$ has level at least $C(\mu)$.
This implies that, given $C\in (0, C(\mu))$, for any $m>m_0$, an arc
of the ray $R_{t_{k_m}(x_m)}$ between the levels $C$ and $C(\mu)$ does not exit $B(y,\mu)$. As this sequence of arcs tend uniformly, as $m\to\infty$, to an arc of $R_\tau$ between the levels $C$ and $C(\mu)$,
this latter arc is contained in $B(y,\mu)$. As $C>0$ can be chosen arbitrary small, the ray $R_\tau$ must land at $y$ and its first intersection
with $B(y,\mu)$ has the level at least $C(\mu)$. Lemma \ref{cl0} is proved.
\end{proof}
\section{A combinatorial property}
Let $f$ be an infinitely-renormalizable quadratic polynomial.
First, we prove the following combinatorial fact (a maximality property) about $f$.

In the course of the further proofs the following well-known easy statement about expanding maps is used
(for a more general theorem about expansive maps, see e.g. \cite{expansive}):
\begin{prop}\label{exp}
Let $h$ be a homeomorphism of a compact metric space $(S,d)$ onto itself which is expanding in the following sense:
there are $\delta>0$ and $\lambda>1$ such that $d(h(x), h(x'))\ge\lambda d(x, x')$ whenever $d(x,x')<\delta$.
Then $S$ is a finite set.
\end{prop}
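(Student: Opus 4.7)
The plan is to show that $h^{-1}$ is a local contraction, use a finite cover of $S$ by small sets whose iterated preimages shrink, and then conclude that $|S|$ is finite by a pigeonhole argument.

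First I would exploit that $h$ is a continuous bijection of the compact space $S$, so $h^{-1}$ is uniformly continuous. Hence there exists $\eta\in(0,\delta]$ such that $d(u,v)<\eta$ implies $d(h^{-1}(u),h^{-1}(v))<\delta$. Applying the expansion hypothesis to the pair $(h^{-1}(u),h^{-1}(v))$ then gives
$$d(u,v)\;=\;d\bigl(h(h^{-1}(u)),h(h^{-1}(v))\bigr)\;\ge\;\lambda\, d(h^{-1}(u),h^{-1}(v)),$$
so $d(h^{-1}(u),h^{-1}(v))\le d(u,v)/\lambda$ whenever $d(u,v)<\eta$. A simple induction then shows that whenever $d(u,v)<\eta$, one has $d(h^{-k}(u),h^{-k}(v))\le d(u,v)/\lambda^{k}<\eta$ for every $k\ge 0$.

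Next I would use compactness to choose a finite open cover of $S$ by sets $A_1,\dots,A_N$ of diameter strictly less than $\eta$ (say, intersections with $S$ of balls of radius $\eta/3$). Since $h$ is a bijection, $S=h^{-k}(S)=\bigcup_{i=1}^N h^{-k}(A_i)$ for every $k\ge 0$, and by the iterated contraction just established, $\diam(h^{-k}(A_i))\le \eta/\lambda^{k}$ for each $i$ and each $k$.

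Finally, suppose for contradiction that $|S|>N$, and fix distinct points $p_1,\dots,p_{N+1}\in S$. For each $k$, the pigeonhole principle forces two of these points to lie in the same set $h^{-k}(A_i)$, hence at distance $\le \eta/\lambda^{k}$; and since there are only finitely many pairs, some fixed pair $(p_r,p_s)$ must realize this inequality along an infinite sequence of values of $k$, forcing $d(p_r,p_s)=0$, which contradicts distinctness. The only step where I expect to need genuine care is the iterated contraction for $h^{-1}$: one must verify that the intermediate distances $d(h^{-j}(u),h^{-j}(v))$ stay below the threshold $\eta$ for all $j\le k$ so that the expansion inequality can be applied at each step; the remainder is bookkeeping.
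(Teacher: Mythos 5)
The paper does not actually supply a proof of Proposition~\ref{exp}; it is stated as a well-known fact with a pointer to Gottschalk--Hedlund for a more general statement about expansive maps, so there is no in-text argument to compare yours against. Your proof is correct and self-contained, and it follows the natural elementary route. Uniform continuity of $h^{-1}$ on the compact space $S$ is what converts the local expansion hypothesis into a uniform local $1/\lambda$-contraction estimate for $h^{-1}$ at scale $\eta\le\delta$; the induction then keeps all iterated distances below $\eta$, so the contraction estimate can legitimately be applied at every step --- this is exactly the point you flagged, and the bound $d(h^{-k}(u),h^{-k}(v))\le d(u,v)/\lambda^k<\eta$ closes it, since $\lambda>1$. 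Pulling back a finite cover of $S$ by sets of diameter $<\eta$ under $h^{-k}$ then gives covers of $S$ by $N$ sets of diameter at most $\eta/\lambda^k$, and the pigeonhole argument over the fixed finite collection of pairs $\{p_r,p_s\}$ forces $|S|\le N$. I see no gaps; this is exactly the kind of argument the authors had in mind when calling the proposition ``well-known'' and ``easy.''
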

Let $\omega(t)$ be the omega-limit set of $t\in {\bf S}^1$ under $\sigma: t\mapsto 2t (mod 1)$ and $\omega(E)=\cup_{t\in E} \omega(t)$.
\begin{lemma}\label{2p}

(i) $\sigma^{-1}(\tilde K_c)\subset \omega(\tilde K_c)$

(ii) $\tilde J_\infty=\omega(\tilde K_c)$.
\end{lemma}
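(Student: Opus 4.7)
I would prove both parts in a unified way, deriving (i) as the case $K = K_0$ of a general statement and deducing (ii) by taking a union over components.

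The main technical input is a transport formula. Since $K_c \subset f(J_n)$ and $f^{p_n}$ fixes $f(J_n)$ setwise, every iterate $\sigma^{k p_n}(\tilde K_c)$ stays inside $\widetilde{f(J_n)}$, which by (\ref{tildej1}) is contained in $s^1_{n, 1}$. Composing with $\sigma^{j-1}$ and using that $f^{j-1}(U^1_{n, 1}) = U^1_{n, j}$, one gets
\[
\sigma^{j - 1 + k p_n}(\tilde K_c) \subset s^1_{n, j}
\qquad \text{for all } k \geq 0 \text{ and } 1 \leq j \leq p_n.
\]
Given any component $K = \bigcap_n f^{j_n}(J_n)$ of $J_\infty$, Lemma \ref{tilde} writes $\tilde K = \bigcap_n s^1_{n, j_n}$, where each $s^1_{n, j_n}$ is a union of four intervals of length $< 2^{-p_n}$ shrinking to the (at most four) points of $\tilde K$. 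Consequently, at times $\equiv j_n - 1 \pmod{p_n}$ the orbit of $\tilde K_c$ is pinned inside $s^1_{n, j_n}$, so every accumulation point of these iterates lies in $\tilde K$.

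The hard step is showing that \emph{every} point of $\tilde K$ is genuinely realized as such a limit. Concretely, for a target $t^* \in \tilde K$ and each large $n$, one must exhibit $(i, k) \in \{1, 2\} \times \N$ such that $\sigma^{k p_n}(\tau_i)$ lies in the 1-window of $s^1_{n, 1}$ that $\sigma^{j_n - 1}$ maps to the 1-window of $s^1_{n, j_n}$ containing $t^*$. This is a symbolic-dynamics problem on the Cantor set $\widetilde{f(J_n)}$, on which $\sigma^{p_n}$ acts as the full shift on two symbols $\{0, 1\}$. Wandering of $K_c$ (Section 2.(B)) forces $\tau_i$ to be non-periodic under $\sigma^{p_n}$; invoking Proposition \ref{exp} on its orbit closure rules out a finite closure. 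Coupled with the fact that $\tau_1 \in [t_n, t_n']$ and $\tau_2 \in [\tilde t_n', \tilde t_n]$ already supply both initial symbols, a careful combinatorial enumeration then shows that the two combined orbits realize all four length-$2$ blocks and hence visit all four 1-windows of $s^1_{n, 1}$. I expect this bookkeeping to be the main difficulty, especially in case (2) of Lemma \ref{tilde} where $\tilde K$ can have four points.

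Part (i) is the specialization $K = K_0$, for which $j_n = p_n$ and $\tilde K_0 = \sigma^{-1}(\tilde K_c)$. For part (ii), the inclusion $\omega(\tilde K_c) \subset \tilde J_\infty$ is immediate from $\tilde K_c \subset \tilde J_\infty$ together with closedness and forward $\sigma$-invariance of $\tilde J_\infty$ (the latter inherited from compactness and $f$-invariance of $J_\infty$). The reverse inclusion follows by writing $\tilde J_\infty$ as the union of $\tilde K$ over all components $K$ of $J_\infty$ (using Lemma \ref{tilde} to see that each component contributes at most four angles) and applying the general statement to each component.
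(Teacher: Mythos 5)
Your proposal sets up the right framework but stops exactly where the real work begins, and the placeholder you leave (\emph{``a careful combinatorial enumeration then shows $\ldots$''}) is not an argument; it is the entire content of part (i) in the hard case. Concretely: knowing that $\tau_1,\tau_2$ are non-periodic under $\sigma^{p_n}$ and that their orbits stay in $\widetilde{f(J_n)}$ (identified with a $2$-shift via $\sigma^{p_n}$) does \emph{not} imply that the combined orbits realize all four length-$2$ blocks. A non-periodic point of the full $2$-shift can easily avoid one transition forever (e.g.\ the sequence $0\,1\,0\,0\,1\,0\,0\,0\,1\ldots$ never realizes $11$), and Proposition~\ref{exp} only tells you the orbit closure is infinite, which is far weaker than the transitivity-type statement you need. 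The constraint that actually forces the missing transitions to occur is the unlinking structure of the two-ray pairs landing on the grand orbit of $c$ (properties $1_\infty$--$3_\infty$ of (D1)), and this is precisely what your sketch never invokes.

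The paper handles this by a proof by contradiction that is structurally different from what you propose. It first shows (step (a)) that $\omega(t)\cap\tilde K_c\neq\emptyset$ for every $t\in\tilde J_\infty$, using the fact that $\sigma$ on $\tilde J_\infty$ is two-to-one only over $\tilde K_c$ plus Proposition~\ref{exp}; this immediately dispatches the case $\#\tilde K_c=1$. For $\#\tilde K_c=2$ it assumes some preimage $\tau\in\sigma^{-1}(\tau_2)$ is missed, reduces to showing the backward-shifted orbit $\{\sigma^{jp_n-1}(\tau_1)\}_{j>0}$ cannot remain in a single semicircle $L'$, and derives the contradiction by pushing this set forward under the straightening map $\theta:\widetilde{J_n}\to\mathbf{S}^1$ to obtain a $\sigma$-rotation set contained in a semicircle, then invoking Bullett--Sentenac (both endpoints of that semicircle must lie in the set) to contradict $1_\infty$. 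That rotation-set step is the crux, and nothing in your outline replaces it. Your unified-over-$K$ framing also makes the task strictly harder than necessary: the paper proves (i) first and then deduces (ii) from it via Lemma~\ref{tilde}, whereas you would need to run the hard step for every component $K$, including case~(2) of Lemma~\ref{tilde} where $\tilde K$ can have four points. In short, the reduction is sound but the central lemma you would need is exactly what remains unproved.
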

\begin{remark}\label{ii}
Only part (i) is used in the proof of Theorem \ref{thm:exp}. Part (ii) seems to be of an independent interest and is included for completeness.
\end{remark}
\begin{proof}[Proof of (i)]
(a) Consider $\sigma: \tilde J_\infty\to \tilde J_\infty$. Each $t\in J_\infty$ belongs to $\tilde K$, for one and only one component $K$ of $J_\infty$.
By (B), one and only one component $K^{-1}$ of $f^{-1}(K)$ is a component of $J_\infty$, moreover, $f: K^{-1}\to K$ is a homeomorphism if $K\neq K_c$ and $f: K_0\setminus \{0\}\to K_c\setminus \{c\}$ is two-to-one.
Hence, $\sigma^{-1}(t)\cap \tilde J_\infty$ is a single point for $t\notin\tilde K_c$ and two different points for $t\in\tilde K_c$.
On the other hand, for each $t\in {\bf S}^1$, $\sigma$ maps $\omega(t)$ onto itself. Since $\sigma: \tilde J_\infty\to \tilde J_\infty$ has no periodic points, by Proposition \ref{exp},
for each $t\in \tilde J_\infty$, the expanding map $\sigma: \omega(t)\to \omega(t)$ is not injective. Therefore, $\omega(t)\cap \tilde K_c\not=\emptyset$ for all $t\in\tilde J_\infty$.

(b) If $\tilde K_c$ consists of a single angle $\tau_c$, then (a) implies that there is a single point $t\in\omega(\tau_c)$ such that $\sigma^{-1}(t)\subset\omega(\tau_c)$
and, moreover, $t=\tau_c$.
Therefore,
$\sigma^{-1}(\tau_c)\subset \omega(\tau_c)$
and we are done in this case.

(c) It remains to deal with a two-point set $\tilde K_c=\{\tau_1, \tau_2\}$.
Let us assume the contrary, i.e., $\sigma^{-1}(\tau_1)\cup \sigma^{-1}(\tau_2)$ is not a subset of
$\omega(\tilde K_c)=\omega(\tau_1)\cup \omega(\tau_2)$.
Hence, by (a) and by the assumption, either
$\sigma^{-1}(\tau_1)$ or $\sigma^{-1}(\tau_2)$ is a subset of each
$\omega(\tau_i)$, $i=1,2$.
Let, say, $\sigma^{-1}(\tau_1)\subset \omega(\tau_1)\cap\omega(\tau_2)$.
By the assumption, there is $\tau\in \sigma^{-1}(\tau_2)$ such that $\tau\notin \omega(\{\tau_1,\tau_2\})$.
Let $L$ be a (open) semi-circles ${\bf S^1}\setminus \sigma^{-1}(\tau_1)$ that contains $\tau$.
We claim that it is enough to show that for each $p_n$ and some $j_n>0$ the arc $L$ contains
$\sigma^{j_n p_n-1}(\tau_1)$. Indeed, assume this is the case.
Then, by (D1)$3_\infty$, Sect. \ref{prel}, $L$ must contain
one of the arcs ${\bf S^1}\setminus \{\sigma^{j_n p_n-1}(\tau_1), \sigma^{j_n p_n-1}(\tau_2)\}$ and, by (D1)$2_\infty$,
the lengths of all such arcs are uniformly away from $0$. Hence, there is a subsequence $n_i\to \infty$
such that the sequences $\sigma^{j_{n_i} p_{n_i}-1}(\tau_1)$ and $\sigma^{j_{n_i} p_{n_i}-1}(\tau_2)$ converge
to points $a_1$ and $a_2$ respectively, where $a_1\not=a_2$ and $a_1, a_2\in \overline{L}$.
On the other hand, $a_1,a_2\in\tilde K_0$. But, from the assumption, $\tilde K_0\cap L=\{\tau\}$.
Therefore $\{a_1, a_2\}\subset\{\tau, \sigma^{-1}(\tau_1)\}$. Since $a_1,a_2\in\omega(\{\tau_1,\tau_2\})$ while $\tau\notin \omega(\{\tau_1,\tau_2\})$,
we conclude that $\{a_1, a_2\}=\sigma^{-1}(\tau_1)$. But then $\sigma^{j_{n_i} p_{n_i}}(\tau_1)$ and $\sigma^{j_{n_i} p_{n_i}}(\tau_2)$
converge to the same point $\tau_1$ which is a contradiction with (D1)$2_\infty$.

(d) To show that, for each $n$, the arc $L$ contains a
point $\sigma^{j_n p_n-1}(\tau_1)$, for some $j_n>0$, let us assume the contrary. So, we fix $n>0$ and assume that
$\Lambda:=\{\sigma^{j p_n-1}(\tau_1): j>0\}\subset L':={\bf S^1}\setminus L$.
The idea is to show that the set $\Lambda$ corresponds to a rotation set $\Lambda_0$ of $\sigma: {\bf S^1}\to {\bf S^1}$ and use a certain structure of later sets \cite{BS},
see (D2) of Sect. \ref{prel}, to arrive at a contradiction.

Following the notations of (C), let $\Omega_n^0=f^{-1}(\Omega_n)$ and $U_n^0=f^{-1}(U_n)$.
To connect it to other notations introduced in (C), $\Omega_n^0=U_{n,p_n}=f^{p_n-1}(U_n)$ and $U_n^0=U^1_{n,p_n}$.
Then $J_n=\{z| f^{j p_n}(z)\in \overline{U_n^0},  G(f^{j p_n}(z))\le 10, j=0,1,\cdots\}$.
The domain $\Omega_n^0$ is bounded by two bi-infinite curves $\Gamma_n$, $\Gamma_n'$ (components of $f^{-1}(\partial\Omega_n$)) and two angular (closed) "arcs at infinity"  which are components $S_{n,0}, S_{n,0}'$ of $\sigma^{-1}(S_{n,1})$ where $S_{n,1}=[t_n, \tilde t_n]$, $0<t_n<\tilde t_n<1$. In fact, $S_{n,0}, S_{n,0}'$ are two components of $s_{n,p_n}=\sigma^{p_n-1}(s_{n,1})$.
Arguments of rays entering $\overline{U_n^0}$ form the set $s^1_{n,p_n}\subset s_{n,p_n}$ consisting of four $1$-'widows' such that
$\sigma^{p_n}$ is a homeomorphism of each of them onto either $S_{n,0}$ or $S_{m,0}'$.
By (\ref{tildejany}), $\widetilde{J_n}=\widetilde{f^{p_n}(J_n)}=\{t| \sigma^{k p_n}(t)\in s^1_{n,p_n}, k=0,1,...\}$.

Let us specify $\Gamma_n$ to be such component of $f^{-1}(\partial\Omega_n)$ that contains $f^{p_n-1}(\beta_n)$  (in other words, $f^{p_n}(\Gamma_n)=\Gamma_n$),
and $S_{n,0}$ to be the first "arc" as one goes from $\Gamma_n$ to $\Gamma_n'$ inside of $\Omega_n^0$ in the counterclockwise direction along the "circle at infinity".
In other words, $S_{n,0}$ denotes the component of $\sigma^{-1}(S_{n,1})$ that contains $\sigma^{p_n-1}(t_n)$ in its boundary.
Now, let $\epsilon(t)=0$ if $t\in S_{n,0}$ and $\epsilon(t)=1$ if $t\in S_{n,0}'$.
To every $t\in\widetilde{J_n}$
we assign a point $\theta(t)\in {\bf S}^1$ as follows:
$$\theta(t)=\sum_{j=0}^\infty \frac{\epsilon(\sigma^{j p_n}(t))}{2^{j+1}}.$$
Then $\theta(\widetilde{J_n})={\bf S^1}$, $\theta\circ \sigma^{p_n}=\sigma\circ \theta$,
and $\theta:\widetilde{J_n}\to {\bf S^1}$ extends to a continuous monotone degree one map ${\bf S^1}\to {\bf S^1}$, see \cite{dangle}, \cite{Mi1}, \cite[Theorem 3]{Leray}.
Moreover, $\theta$ is injective on a subset $T=\{t\in \widetilde{J_n}| \sigma^{k p_n}(t)\notin \partial(S_{n,0}\cup S_{n,0}'), k\ge 0\}$. Note that $\widetilde{J_\infty}\subset T$ where
$\widetilde{J_\infty}$ is closed.
Besides, $\tilde K_0=\sigma^{-1}(\{\tau_1,\tau_2\})$ and $\Lambda=\{\sigma^{j p_n-1}(\tau_1): j>0\}$ are subsets of $\widetilde{J_\infty}\subset T$.
If $\theta_0:=\theta(\tau_1/2)$ then $\theta_1:=\theta(\tau_1/2+1/2)=\theta_0+1/2$. Therefore,
from the assumption, the set
$\Lambda_0:=\theta(\Lambda)$ is a subset of a semi-circle $C_{\theta_0}$ with end points $\theta_0$ and $\theta_1$.
As $\sigma^{p_n}(\Lambda)\subset\Lambda$, then $\sigma(\Lambda_0)\subset \Lambda_0$.
It follows \cite{BS} (see (D2) of Sect. \ref{prel} in the present paper), that the set $\overline{\Lambda_0}$ is a rotation set of $\sigma:{\bf S^1}\to {\bf S^1}$.
Let $E\subset \overline{\Lambda_0}$ be a closed subset such that $\sigma: E\to E$ is minimal. As $\overline\Lambda\subset\widetilde{J_\infty}\subset T$
where $\widetilde{J_\infty}$ contains no periodic points of $\sigma^{p_n}$, $E$ contains no periodic points of $\sigma$.
Hence, $E$ is infinite. By \cite{BS}, see (D2), for every closed infinite minimal rotation set of $\sigma$ there is a unique closed semi-circle containing it and in this case the end points of the semi-circle belong to the set.
Thus $\theta_0, \theta_0+1/2\in E\subset\overline{\Lambda_0}\subset\overline{C_{\theta_0}}$.
Coming back to the $f$-plane, we find two sequences $j_i, j_i'\to \infty$ so that $\sigma^{j_i p_n-1}(\tau_1)\to \tau_1/2$ and $\sigma^{j_i' p_n-1}(\tau_1)\to \tau_1/2+1/2$ inside of the same semi-circle $L'$
with the end points $\tau_1/2$, $\tau_1/2+1/2$. But then  $\sigma^{j_i p_n}(\tau_1)$ and $\sigma^{j_i' p_n}(\tau_1)$ both tend to $\tau_1$ from different sides, in a contradiction with (D1)$1_\infty$.
This completes the proof of part (i) of the statement.

Let us prove part (ii). Obviously, $\omega(\tilde K_c)\subset \tilde J_\infty$. To show the opposite,
given $t\in\tilde J_\infty$ let $K$ be a unique component of $J_\infty$ such that $t\in\tilde K$.
Let $K=\cap_{n\ge 1} f^{j_n}(J_n)$.
By Lemma \ref{tilde}, there is an alternative: either (1) $p_n-j_n\to\infty$ as $n\to\infty$ and
$\tilde K=\cap_{n=1}^\infty s_{n,j_n}$ or (2) there is $N\ge 0$ such that $f^N(K)=K_0$, i.e., $p_n-j_n=N$ for all $n$.
In case (2), $f^N: K\to K_0$ is a homeomorphism, which implies that $\sigma^N:\tilde K\to \tilde K_0$ is also a homeomorphism.
Hence, if $s\in\tilde K_0$ is a point of $\omega(\tilde K_c)$, then $\sigma^N|_{\tilde K}^{-1}(s)$ is also a point of $\omega(\tilde K_c)$.
On the other hand, by part (i), $\tilde K_0=\sigma^{-1}(\tilde K_c)\subset \omega(\tilde K_c)$. Therefore, $t\in\omega(\tilde K_c)$
in case (2). In case (1), $\{t\}=\cap_{n\ge 1}s^{(t)}_{n,j_n}$ where $s^{(t)}_{n,j_n}$ is one of the two 'windows' of $s_{n,j_n}$,
and $\sigma^{j_n}: s^{(t)}_{n,1}\to s^{(t)}_{n,j_n}$ is a homeomorphism where of course $s^{(t)}_{n,1}$ is one of the two
'windows' of $s_{n,1}$. Hence, it's enough to show that each 'window' of $s_{n,1}$ contains points of the orbit of $\tilde K_c$.
As $\tilde K_0=\cap_n s_{n,1}$, in the case $\# \tilde K_c=2$, for all $n$ big enough, each 'window' of $s_{n,1}$ contains one
and only one of the two points of $\tilde K_c$, and we are done in this case.
In the remaining case, $\tilde K_c$ is a single point which lies in one of the two 'widows' of $s_{n,1}$, for each $n$.
But, as $\sigma^{p_n}$ maps each 'window' of $s_{n,1}$ onto $S_{n,1}\supset s_{n,1}$ homeomorphically it is obvious that each 'window' of $s_{n,1}$ contains infinitely many points of the
$\sigma^{p_n}$-orbit of any $t_0\in \widetilde{f(J_n)}$ provided no point of this orbit hits
$\partial S_{n,1}$. In particular, this applies to points of $\tilde K$
This completes the proof.
\end{proof}
\begin{remark}
In case (1) we proved more: if $t\in\tilde J_\infty$ is such that $\sigma^N(t)\notin \tilde K_0$ for all $N\ge 0$, then $t\in\omega(\tau)$
for each $\tau\in\tilde K_c$.
\end{remark}

\section{Proof of Theorem~\ref{thm:exp}}\label{thm1}
1. Assume the contrary and let $X\subset J_\infty$ be a compact $f$-invariant hyperbolic set. In particular, Corollary~\ref{accesscoro} applies.

2. Replacing $X$ by its subset if necessary we can assume that
$f:X\to X$ is a minimal map.

3. $0\notin X$, hence, $c\notin X$, too.

4. As $J_\infty$ contains no cycles, $X$ is an infinite set.
If we assume that $f:X\to X$ is one-to-one, then
$f:X\to X$ is an expanding homeomorphism of a compact set, therefore,
$X$ is finite, a contradiction with Proposition \ref{exp}.

5. Thus $f:X\to X$ is not one-to-one.
Then, by (B), $X_c:=X\cap K_c\not=\emptyset$. On the other hand, by step 3, $c\notin X_c$.




6. By (C), $\tilde K_c$ consists of either one or two arguments. As any point of $X$ is accessible, $X_c$ consists of either one or two different points.
Let $x_1\in X_c$ and $x_2\in J(f)$ be any other point.
Given $r>0$ and $n$, let $W_n(x_1, r)$ be a component of $B(x_1,r)\cap \Omega_n$ (see (C), Sect. \ref{prel}, where $\Omega_n$ is defined) that contains the point $x_1$.
We use repeatedly the following

{\bf Claim 1}. {\it Given $\hat r>0$ and $\hat C>0$, there is $\hat n\in {\bf N}$ as follows.
For $k=1,2$, suppose that, for some angles $\hat t_k$,  the ray $R_{\hat t_k}$ lands at $x_k$ and let $z_k$ be the first intersection of $R_{\hat t_k}$ with $\partial B(x_k,\hat r/2)$.
Assume:
(a) $G(z_k)>\hat C$ for $k=1,2$,
(b) $x_2\in\Omega_n$ and $|x_1-x_2|<\hat r/3$,
(c) one of the following holds:
(i) $t_n-\tilde t_n\to 0$ as $n\to\infty$, or
(ii) $\hat t_1,\hat t_2$ belong to a single component of
$s_{n,1}=[t_n, t_n']\cup [\tilde t_n',\tilde t_n]$.
Then $x_2\in W_n(x_1,\hat r)$
for each $n>\hat n$}.

Indeed, the length of each component of $s_{n,1}$ goes to zero as $n\to\infty$.
Hence, as $\hat r$ and $\hat C$ are constants and $n$ is big enough, condition (c) implies that a curve which consists of an arc of $R_{\hat t_1}$ from $x_1$ to $z_1$, then the shortest arc of the equipotential
containing $z_1$ from $z_1$ to the first
intersection $u_2$ with $R_{\hat t_2}$ and then back along $R_{\hat t_2}$ from $u_2$ to $x_2$ belongs to
$\Omega_n$ and $B(x_1,\hat r)$ simultaneously.

7. Fix $r>0$ small enough. Let $a\in X_c$ and $a_{-1}\in X$ be such that $f(a_{-1})=a$.
As $a_{-1}\in K_0\cap X$, there is its uniquely defined backward orbit
$\{a_{-m}\}_{m=1}^\infty\subset X$, $f(a_{-m-1})=a_{-m}$, $m\ge 1$. Let $a'$ be a limit point of the sequences $a_{-p_n}$, i.e, $a'=\lim_{i\to\infty}a_{-p_{n_i}}$.
As $a_{-p_n}\in f(J_n)$,
$a'$ belongs to $K_c$ and $X$ at the same time, that is, $a'\in X_c$.

{\bf Claim 2}. {\it For all $i$ large enough, $a_{-p_{n_i}}\in W_{n_i}(a',r)$}.

Indeed,
by Corollary~\ref{accesscoro} there is a subsequence $(n'_i)$ of $(n_i)$ and a converging sequence $t_i\in A_{a_{-(p_{n'_i})}}$ such that $t:=\lim_{i\to\infty} t_i$ and $t\in A_{a'}$. We have: $t_i\in s_{n'_i}$ for all $i$.
Then Claim 1 of Step 6 applies
for each $i$ big enough, with $\hat r=r$, $\hat C=C(r/2)$, $x_1=a'$, $x_2=a_{-p_{n'_i}}$, $\hat t_1=t$, $\hat t_2=t_i$ and $z_1,z_2$ defined by this data as in Claim 1. Indeed, (a) holds by Corollary~\ref{accesscoro}(3) and (b) is obvious (note that $a_{-p_n}\in f(J_n)\setminus\{\beta_n\}\subset\Omega_n$). Moreover, if (ci) breaks down, since $t_i\to t$, then $t_i$ and $t$ must lie at the same component of $s_{n'_i}$.

By the conclusion of Claim 1, $a_{-p_{n'_i}}\in W_{n'_i}(a',r)$ for each $i$ big enough. Finally, as $A_{a'}$ consists of at most four points (therefore, the sequence $(a_{-n_i})$ has at most four limit points), Claim 2 follows.

8. Consider the case $X_c=\{a\}$. Let $f^{-1}(a)=\{a_{-1}, a^*_{-1}\}$. By steps 2, 3 and 5, $a_{-1}\neq a^*_{-1}$ and $a_{-1},a^*_{-1}\in X$. As $X_c=\{a\}$, there is a subsequence $(n_i)$ such that
backward images $a_{-p_{n_i}}$, $a^*_{-p_{n_i}}$ of $a_{-1}$, $a^*_{-1}$ respectively tend to the same point $a$.
By Claim 2 Step 7, for each $i$ large, $a_{-p_{n_i}}$, $a^*_{-p_{n_i}}\in W_{n_i}(a,r)$.
Therefore, the following two sets (which are preimages of $W_{n_i}(a,r)$ by $f^{p_{n_i}}$):
$V_{n_i}:=g_{p_{n_i},a_{-(p_{n_i})}}(W_{n_i}(a, r))$ and $V_{n_i}^*:=g_{p_{n_i},a^*_{-(p_{n_i})}}(W_{n_i}(a, r))$, are
disjoint with their closures (because preimages of $B(a,r)$ along points of $X$ shrink exponentially) and both are
contained in $W_{n_i}(a,r)$.
Fix such $n=n_i$. Denote for brevity $J_{c,n}=f(J_n)$. Then we get that, for every $j>0$, $2^j$ preimages
of $a\in J_{c,n}$ by the map $f^{j p_n}: J_{c,n}\to J_{c,n}$ are contained in
the (disjoint) closures of $V_n$ and $V_n^*$. As the set of all those preimages are dense in $J_{c,n}$,
we get a contradiction with the fact that $J_{c,n}$ is a continuum.

9. Consider the remaining case $X_c=\{a, b\}$, $a\not=b$. As $\# \tilde K_c\le 2$, each point $a$ and $b$ is accessible by a single ray $R_{t(a)}$ and $R_{t(b)}$ respectively.
Hence, any point $u$ from the {\it grand orbits} of $a$ and $b$ is a landing point of precisely one ray $R_{t_u}$.
Let us prove that $f^{-1}(X_c)\subset X$. Let $f(w)=x\in X_c$. By Lemma~\ref{2p}(i), one can find $y\in X_c$ and $m_i\to \infty$ such that $\sigma^{m_i}(t_y)\to t_w$
and $f^{m_i}(y)$ tends to some point $\tilde w\in X$. By Corollary~\ref{accesscoro}, $t_w\in A_{\tilde w}$. But $\sigma(t_w)=t_x$, hence, $f(\tilde w)=x$ and $A_{\tilde w}=\{t_w\}$.
Thus $w=\tilde w\in X$.

10. We have just proved that $\{a_{-1}, a^*_{-1}\}=f^{-1}(a)\subset X$ and $\{b_{-1}, b^*_{-1}\}=f^{-1}(b)\subset X$.
Now, we repeat the consideration as in Step 8. The sequences $a_{-(p_n)}$, $a^*_{-(p_n)}$, $b_{-(p_n)}$, $b^*_{-(p_n)}$ must have all their limit points in $X_c$.
As $r>0$ is small enough, $\overline{B(a,r)}\cap \overline{B(b,r)}=\emptyset$.
By Claim 2 of Step 7, for each $n$ large, all $4$ points $a_{-(p_n)}, a^*_{-(p_n)}, b_{-(p_n)}, b^*_{-(p_n)}$ are in $W_n(a, r)\cup W_n(b, r)$.
Fixing $n$ large, for each disk $B(x, r)$, $x\in \{a,b\}$, there are two univalent branches of $f^{-p_n}$ which are defined in $B(x, r)$
such that they map $W_n(x, r)$
inside $W_n(a, r)\cup W_n(b, r)$. Hence, for every $j>0$, $4^j$ preimages
of $X_c\in J_{c,n}$ by the map $f^{j 2p_n}: J_{c,n}\to J_{c,n}$ are contained in
the (disjoint) closures of $W_n(a,r)$ and $W_n(b,r)$. As the set of all those preimages are dense in $J_{c,n}$,
we again get a contradiction with the fact that $J_{c,n}$ is a continuum.
\begin{remark}\label{high}
The combinatorial property for quadratic polynomials of Lemma \ref{2p}
implies that if $X\subset J_\infty$ is a minimal hyperbolic set then $f^{-1}(X)\cap J_\infty=X$ provided $f$ is quadratic, and this leads to a contradiction. Therefore, a small modification of the proof gives us
the following claim for infinitely-renormalizable unicritical polynomial $f(z)=z^d+c$ with any $d\ge 2$:
$J_\infty$ contains no hyperbolic sets $X$ such that $f:X\to X$ is minimal and $f^{-1}(X)\cap J_\infty=X$.
\end{remark}

\section{Final remarks}
A hyperbolic set of a rational map always carries an invariant measure with a positive Lyapunov exponent.
Conjecturally, $J_\infty$ as in Theorem \ref{thm:exp} never carries such a measure.
We cannot prove this conjecture in the full generality so far,
but we can easily prove at least that
  $F:=f|_{J_\infty}$ is not "chaotic". Namely,

1. Every $F$-invariant probability measure $\mu$ has zero entropy, $h_\mu(F)=0$.

2. Topological entropy of $F$ is zero, $h_{\rm top}(F)=0$.

Proving it we can assume $\mu$ is ergodic due to Ergodic Decomposition Theorem, see e.g. \cite[Theorem 2.8.11a)]{PU}.
Start by observing that, for every $n$ and $0\le j<p_n$, $\mu(f^j(J_n))=1/p_n$, hence,
$\mu$ has no atoms and $\mu(K)=0$ for every component $K$ of $J_\infty$. Therefore, if $J'_\infty=J_\infty\setminus\cup_{n\in\Z}f^n(K_0)$
where $K_0$ is the component of $J_\infty$ containing $0$ (see (B) of Sect. \ref{prel}), then $F: J'_\infty\to J'_\infty$ is an automorphism.
Suppose to the contrary that $h_\mu(F)>0$. Then 1. follows from Rokhlin entropy formula, \cite[Theorem 2.9.7]{PU}, saying that $h_\mu(F)=\int\log {\rm Jac}_\mu (F) d\mu$. Here
${\rm Jac}_\mu$ is Jacobian with respect to $\mu$, equal to 1 $\mu$-a.e,. since $\mu$ must be supported on $J'_\infty\subset J_\infty$ where $F$ is an automorphism. A condition to be verified  to apply Rokhlin formula is the existence of a one-sided countable generator of bounded entropy, proved to exist by Ma\~n\'e, see e.g. \cite[Lemma 11.3.2]{PU}
and inclusion \cite[(11.4.8)]{PU}, due to positive Lyapunov exponent $\chi_\mu(F):=\int\log|F'|d\mu\ge {1\over 2} h_\mu(F)>0$ (Ruelle's inequality). Thus $h_\mu(F)>0$ has led to a contradiction.

2. follows from 1. by variational principle $h_{\rm top}(F)=\sup_\mu h_\mu(F)$.

Compare item 4 in Section \ref{thm1}.
Here finiteness of $X$ is replaced by zero entropy.

The same proof with obvious modifications holds for $f(z)=z^d+c$, $d\ge 2$.


\end{document}